\documentclass[12pt,reqno]{amsart}
\usepackage{amsmath,amssymb,amsthm}
\usepackage{geometry}
\usepackage{orcidlink}

\title{On an Overpartition Analogue of \(SOME(n)\) }

\author{D. S. Gireesh$^{1}$\orcidlink{0000-0002-2804-6479} and {B. Hemanthkumar$^{2}$\orcidlink{0000-0001-7904-293X}}}
\address{$^{1}$Department of Mathematics, B.M.S. College of Engineering, P.O. Box No.: 1908, Bull Temple Road,
Bengaluru-560 019, Karnataka, India.}
\email{gireeshdap@gmail.com}

\address{$^2$Department of Mathematics, RV College of Engineering, RV Vidyanikethan Post, Mysore Road, Bengaluru-560 059, Karnataka, India.}
\email{hemanthkumarb.30@gmail.com}

\subjclass[2020]{primary 11P81, 11P83; secondary 05A15, 05A17}
\keywords{Partitions; Overpartitions; Generating Functions; Congruences}

\newtheorem{theorem}{Theorem}

\newtheorem{corollary}[theorem]{Corollary}
\newtheorem{lemma}[theorem]{lemma}
\begin{document}

\begin{abstract}
Recently, Andrews and Dastidar introduced the partition function $SOME(n)$, defined as the sum of all the odd parts in the partitions of $n$ minus the sum of all the even parts in the partitions of $n$. They derived its generating function and established some congruences satisfied by \(SOME(n)\). In this paper, we introduce an overpartition analogue of $SOME(n)$, denoted by $\overline{SOME}(n)$, the sum of all the odd parts in the overpartitions of \(n\) minus the sum of all the even parts in the overpartitions of \(n\). We derive the generating function for $\overline{SOME}(n)$ and obtain congruences modulo \(3, \ 5\) and powers of \(2\). Our method is based on classical $q$-series identities and manipulations of infinite products and sums.
 \end{abstract}

\maketitle

\section{Introduction}
A partition of a nonnegative integer $n$ is a finite sequence
\(
\lambda = (\lambda_1,\lambda_2,\ldots,\lambda_k)
\)
such that
\(
\lambda_1 \ge \lambda_2 \ge \cdots \ge \lambda_k \ge 1\)
and
\(\sum_{i=1}^{k} \lambda_i = n.
\)
We write $\lambda \vdash n$ to denote that $\lambda$ is a partition of \(n\). Let $p(n)$ denote the number of such partitions of $n$. 
By convention, $p(0)=1$. One of the famous results in partition theory, due to Ramanujan, is the congruence
\[
p(5n+4) \equiv 0 \pmod{5},
\]
which states that the number of partitions of integers of the form \(5n+4\) is always divisible by \(5\) . 
In their recent paper, Andrews and Dastidar\cite{AD} introduced a new weighted partition function, denoted by $SOME(n)$. For a given \(n\), the function is defined as the total of all odd parts minus the total of all even parts taken over every partition of $n$. They obtained the generating function 
\[
\sum_{n=1}^\infty SOME(n) q^n
=
\frac{1}{(q;q)_\infty}
\sum_{m=1}^\infty
\frac{q^{m}}{(1+q^{m})^2}
\]
and proved that this function satisfies a similar congruence
\[SOME(5n+4) \equiv 0 \pmod{5}.\]
 Here and throughout this paper, we use the standard $q$-Pochhammer symbol notation
\[
(a;q)_\infty = \prod_{n=0}^{\infty} (1-aq^n).
\]

An overpartition of a nonnegative integer $n$ is a partition of $n$ 
in which the first occurrence of each distinct part may be overlined. 
The theory of overpartitions was first systematically studied by Corteel and Lovejoy\cite{CL}. We denote by $\overline{p}(n)$ the number of overpartitions of $n$. By convention, $\overline{p}(0)=1$. The generating function for $\overline{p}(n)$ is given by
\[
\sum_{n=0}^{\infty} \overline{p}(n) q^n 
= \frac{(-q;q)_\infty}{(q;q)_\infty}.
\]

Several partition identities extend naturally to overpartitions, often leading to richer combinatorial and arithmetic structures. Consequently, the arithmetic properties of overpartitions have attracted considerable attention in recent years. In particular, many authors have established numerous Ramanujan-type congruences for overpartition functions modulo powers of  $2, 3,$ and $5$  (see, for example,  \cite{CD, DL, JZ, T, Y}).

Motivated by the results of Andrews and Dastidar on $SOME(n)$, it is natural to ask whether similar weighted statistics defined on overpartitions satisfy analogous arithmetic properties. In this paper, we introduce an overpartition analogue of $SOME(n)$, denoted by $\overline{SOME}(n)$, defined as the sum of all odd parts minus the sum of all even parts taken over all overpartitions of $n$. That is,
\[
\overline{SOME}(n)
=
\sum_{\lambda \vdash n}
\left(
\text{sum of odd parts of } \lambda
-
\text{sum of even parts of } \lambda
\right),
\]
where the sum is taken over all overpartitions $\lambda$ of $n$.

We derive an explicit expression for the generating function of $\overline{SOME}(n)$ and establish several arithmetic properties, including nonnegativity and divisibility results.

In \cite{HS}, Hirschhorn and Sellers showed that
\begin{align*}
    \overline{p}(4n+3) &\equiv 0 \pmod{8},\\
    \overline{p}(8n+7) &\equiv 0 \pmod{64}.
\end{align*}
Here we prove that the same congruences hold for the function $\overline{SOME}(n)$. Consequently, the sum of odd parts in the overpartitions of $4n+3$ and $8n+7$ is divisible by $8$ and $64$, respectively, and the same holds for the sum of even parts.

Our main results are as follows:
\begin{theorem}\label{thm1}
We have
\begin{equation}\label{eq1}
\sum_{n=1}^\infty \overline{SOME}(n) q^n
=
2\,\frac{(-q;q)_\infty}{(q;q)_\infty}
\sum_{m=1}^\infty
\frac{q^{2m-1}}{(1+q^{2m-1})^2}.    
\end{equation}
\end{theorem}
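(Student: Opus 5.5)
We introduce a two-variable generating function that marks each part with a weight, and then differentiate. For an overpartition $\lambda$, let $\mathrm{wt}(\lambda)$ be the sum of its odd parts minus the sum of its even parts. Each part $m$ contributes $(-1)^{m+1}m$ to $\mathrm{wt}(\lambda)$. We therefore attach the factor $z^{(-1)^{m+1}}$ to every occurrence of a part of size $m$, whether overlined or not. The generating function for overpartitions factors over part sizes. For each $m$ the factor $\frac{1+q^m}{1-q^m}$ comes from the optional overlined first copy together with any number of non-overlined copies. With the marking it becomes
\[
F(z,q)=\prod_{m\ge1}\frac{1+z^{\epsilon_m}q^m}{1-z^{\epsilon_m}q^m},\qquad \epsilon_m=(-1)^{m+1}.
\]
It is cleanest to take $z=e^{t}$, so that each part contributes $e^{\epsilon_m m t}$ in total. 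Then $\sum_n \overline{SOME}(n)q^n=\frac{\partial}{\partial t}F\big|_{t=0}$, but only if the exponent records the part size. So the marking must be $z^{\epsilon_m m}$ per part, i.e.\ we replace $q^m$ by $(zq)^m$ for odd $m$ and by $(q/z)^m$ for even $m$. That is the correct setup, and we use $F(z,q)=\prod_m \frac{1+z^{\epsilon_m m}q^m}{1-z^{\epsilon_m m}q^m}$ with $\partial_z$ at $z=1$.

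Next we take the logarithmic derivative. Since $F(1,q)=\frac{(-q;q)_\infty}{(q;q)_\infty}$, we get
\[
\sum_n \overline{SOME}(n)q^n=\frac{(-q;q)_\infty}{(q;q)_\infty}\sum_{m\ge1}\epsilon_m m\left(\frac{q^m}{1+q^m}+\frac{q^m}{1-q^m}\right)=\frac{(-q;q)_\infty}{(q;q)_\infty}\sum_{m\ge1}(-1)^{m+1}\frac{2mq^m}{1-q^{2m}}.
\]
This is the right-hand side of \eqref{eq1} after an identity of Lambert series. It remains to show
\[
\sum_{m\ge1}(-1)^{m+1}\frac{mq^m}{1-q^{2m}}=\sum_{m\ge1}\frac{q^{2m-1}}{(1+q^{2m-1})^2}.
\]

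The main step is this Lambert series identity, which we prove by comparing coefficients of both sides in double-sum form. On the left we expand $\frac{q^m}{1-q^{2m}}=\sum_{k\ge0}q^{m(2k+1)}$. This gives $\sum_{m\ge1}\sum_{j\text{ odd}}(-1)^{m+1}m\,q^{mj}$. On the right we expand $\frac{x}{(1+x)^2}=\sum_{m\ge1}(-1)^{m+1}m x^m$ with $x=q^{j}$ for $j=2k-1$ odd. This gives $\sum_{j\text{ odd}}\sum_{m\ge1}(-1)^{m+1}m\,q^{jm}$. The two double sums are identical, and they converge absolutely as formal power series, so the interchange of order is justified. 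Multiplying by $2$ and substituting back yields \eqref{eq1}. The only real obstacle is correctly setting up the weighted product, in particular making sure the marking variable records part size times sign rather than just the sign, so that the derivative produces the weight $\pm m$ for each part.
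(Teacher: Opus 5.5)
Your proof is correct and is essentially the paper's argument. It uses the same marked product, with $(zq)^m$ for odd $m$ and $(q/z)^m$ for even $m$, and logarithmic differentiation at $z=1$ giving $\frac{(-q;q)_\infty}{(q;q)_\infty}\sum_{m\ge1}(-1)^{m+1}\frac{2mq^m}{1-q^{2m}}$. It then expands geometrically and reorders the double sum to reach $\sum_{m\ge1} q^{2m-1}/(1+q^{2m-1})^2$. In a final write-up, drop the first displayed product with $z^{\epsilon_m}$ (it would compute the number of odd parts minus the number of even parts) and the muddled $z=e^{t}$ remark, and state the corrected product $\prod_{m\ge1}\frac{1+z^{\epsilon_m m}q^m}{1-z^{\epsilon_m m}q^m}$ from the outset.
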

\begin{corollary}
For all $n \ge 1$, $\overline{SOME}(n)$ is even.
\end{corollary}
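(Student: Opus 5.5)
The corollary follows directly from Theorem~\ref{thm1}, so the plan is to read off parity from \eqref{eq1}. The right-hand side of \eqref{eq1} is $2$ times a product of two power series. I will show that each factor has integer coefficients; then every coefficient of the right-hand side is an even integer. Comparing coefficients of $q^n$ for $n\ge 1$ then shows that $\overline{SOME}(n)$ is even.

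The first factor is
\[
\frac{(-q;q)_\infty}{(q;q)_\infty}=\sum_{n\ge 0}\overline{p}(n)q^n .
\]
It has nonnegative integer coefficients, either because it counts overpartitions or because $1/(1-q^k)$ and $1+q^k$ expand with integer coefficients.

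For the second factor, each summand expands as
\[
\frac{q^{2m-1}}{(1+q^{2m-1})^2}=\sum_{k\ge 1}(-1)^{k-1}k\,q^{k(2m-1)} ,
\]
which has integer coefficients. The $m$-th term begins at $q^{2m-1}$, so only finitely many terms contribute to any fixed power of $q$. Hence the sum over $m$ is a well-defined formal power series with integer coefficients; explicitly, the coefficient of $q^N$ is $\sum_{d\mid N,\ d \text{ odd}}(-1)^{N/d-1}N/d$.

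The product of two integer power series has integer coefficients, so multiplying by $2$ gives even coefficients, as claimed. There is no real obstacle here beyond the (routine) convergence and integrality check for the Lambert-type series; all the substance lies in Theorem~\ref{thm1} itself.
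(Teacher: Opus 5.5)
Your proposal is correct and matches the paper's argument. The paper only remarks that the corollary is an immediate consequence of Theorem~\ref{thm1}, and you supply the routine check that both factors in \eqref{eq1} have integer coefficients, so every coefficient of the right-hand side is even.
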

This is an immediate consequence of the preceding theorem.
\begin{corollary}\label{thm12}
For all $n \ge 0$,
\(
\overline{SOME}(n) \ge 0.
\)
\end{corollary}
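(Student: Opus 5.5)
The plan is to prove Corollary~\ref{thm12} directly from the generating function of Theorem~\ref{thm1}, showing that each summand in \eqref{eq1} is a power series with nonnegative coefficients. A single factor $q^{2m-1}/(1+q^{2m-1})^2=\sum_{j\ge1}(-1)^{j-1}jq^{j(2m-1)}$ has alternating signs. The idea is to absorb both factors of $1+q^{2m-1}$ in the denominator into the overpartition product $(-q;q)_\infty/(q;q)_\infty$ before expanding.

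Fix $m\ge 1$ and write $x=q^{2m-1}$. The product $(-q;q)_\infty/(q;q)_\infty$ contains the factor $(1+x)/(1-x)$. Pulling this out, I would write
\[
\frac{(-q;q)_\infty}{(q;q)_\infty}\cdot\frac{x}{(1+x)^2}
=\frac{x(1+x)}{(1-x)(1+x)^2}\prod_{\substack{n\ge1\\ n\ne 2m-1}}\frac{1+q^n}{1-q^n}
=\frac{q^{2m-1}}{1-q^{4m-2}}\prod_{\substack{n\ge1\\ n\ne 2m-1}}\frac{1+q^n}{1-q^n}.
\]
This is a purely formal identity of power series. It holds because every factor $1\pm q^n$ with $n\ge1$ is invertible in $\mathbb{Z}[[q]]$, and the infinite product converges $q$-adically.

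Each factor on the right has nonnegative coefficients. Indeed, $q^{2m-1}/(1-q^{4m-2})=\sum_{k\ge0}q^{(2m-1)(2k+1)}$, and each factor $(1+q^n)/(1-q^n)=1+2\sum_{k\ge1}q^{nk}$. A product of series with nonnegative coefficients again has nonnegative coefficients. Summing over $m$ and multiplying by $2$ then gives, by \eqref{eq1},
\[
\sum_{n\ge1}\overline{SOME}(n)q^n=2\sum_{m\ge1}\frac{q^{2m-1}}{1-q^{4m-2}}\prod_{\substack{n\ge1\\ n\ne 2m-1}}\frac{1+q^n}{1-q^n}.
\]
This series has nonnegative coefficients, which proves $\overline{SOME}(n)\ge0$ for $n\ge1$. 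For $n=0$ the claim is trivial, since $\overline{SOME}(0)=0$ (the empty overpartition has no parts).

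The only step that needs any insight is the first one: realizing that the sign problem in $x/(1+x)^2$ disappears once one copy of $1+x$ from $(-q;q)_\infty$ and the $1-x$ from $(q;q)_\infty$ are combined with it, leaving $x/(1-x^2)$. Everything else is routine. As a check, the formula gives $\overline{SOME}(1)=2$, matching the two overpartitions $1$ and $\overline{1}$.
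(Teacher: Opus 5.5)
Your proof is correct, but it takes a different route from the paper's.

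The paper never treats the summands of \eqref{eq1} individually. It writes $(-q;q)_\infty/(q;q)_\infty=1/\phi(-q)$. It then recognizes $2\sum_{m\ge1}q^{2m-1}/(1+q^{2m-1})^2$ as the logarithmic derivative $q\phi'(q)/\phi(q)=2\sum_{k\ge1}k^2q^{k^2}/\phi(q)$, via the lemma \eqref{Ep1}--\eqref{Ep2}. Finally it uses $\phi(q)\phi(-q)=\phi(-q^2)^2$ from \eqref{eq27} to obtain
\[
\sum_{n\ge1}\overline{SOME}(n)q^n=\frac{2}{\phi(-q^2)^2}\sum_{k\ge1}k^2q^{k^2}.
\]
This has nonnegative coefficients because $1/\phi(-q^2)=\sum_{n\ge0}\overline{p}(n)q^{2n}$.

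Your approach cancels $(1+q^{2m-1})^2$ against the factor $(1+q^{2m-1})/(1-q^{2m-1})$ of the overpartition product, term by term. This needs no theta-function identities at all. It also gives a combinatorial interpretation that the paper does not. With $b=2m-1$, the series $2q^b/(1-q^{2b})=2\sum_{j\text{ odd}}q^{jb}$ is exactly the odd-multiplicity portion of the factor $(1+q^b)/(1-q^b)=1+2\sum_{j\ge1}q^{jb}$. So your formula says that $\overline{SOME}(n)$ counts pairs $(\lambda,b)$, where $\lambda$ is an overpartition of $n$ and $b$ is an odd part occurring in $\lambda$ an odd number of times.

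The paper's route buys the theta-quotient form above instead. Every later congruence modulo powers of $2$, $3$ and $5$ is extracted from that identity, and your decomposition does not lend itself to this.

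You leave one point tacit: the convergence of $\sum_m$ in $\mathbb{Z}[[q]]$. This is immediate, since the $m$-th term is divisible by $q^{2m-1}$.
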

\begin{theorem}\label{thm2}
For all $n \ge 0$,
\[
\overline{SOME}(n) = 2\sum_{k=0}^n \overline{p}(k)\sigma_{o-e}(n-k),
\]
where $\sigma_{o-e}(a)$ denotes the sum of the odd divisors $d$ of $a$ for which $a/d$ is odd, minus the sum of the even divisors $d$ of $a$ for which $a/d$ is odd.
\end{theorem}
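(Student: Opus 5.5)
The plan is to read Theorem~\ref{thm2} directly off the generating function in Theorem~\ref{thm1}. That generating function is the product of $(-q;q)_\infty/(q;q)_\infty=\sum_{k\ge0}\overline{p}(k)q^k$ with the Lambert-type series $\sum_{m\ge1} q^{2m-1}/(1+q^{2m-1})^2$. So it suffices to prove
\[
\sum_{m=1}^\infty \frac{q^{2m-1}}{(1+q^{2m-1})^2}=\sum_{N=1}^\infty \sigma_{o-e}(N)\,q^N ,
\]
and then compare coefficients of $q^n$ in the Cauchy product. We set $\sigma_{o-e}(0)=0$, which makes the case $n=0$ read $0=0$, consistent with the sum in \eqref{eq1} starting at $n=1$.

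For the identity, I will use the expansion $x/(1+x)^2=\sum_{k\ge1}(-1)^{k-1}k\,x^k$, valid for $|x|<1$. It follows by differentiating $1/(1+x)=\sum_{k\ge0}(-1)^k x^k$ and multiplying by $-x$. Substituting $x=q^{2m-1}$ and summing over $m$ gives
\[
\sum_{m=1}^\infty \frac{q^{2m-1}}{(1+q^{2m-1})^2}=\sum_{d\ \mathrm{odd}}\ \sum_{k\ge1}(-1)^{k-1}k\,q^{kd}.
\]
The interchange of summation is justified as a formal power series identity, or by absolute convergence for $|q|<1$. Hence the coefficient of $q^N$ is $\sum_{d\mid N,\ d\ \mathrm{odd}} (-1)^{N/d-1}\,(N/d)$.

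Next I reindex by the complementary divisor $e=N/d$. As $d$ runs over the odd divisors of $N$, $e$ runs exactly over the divisors of $N$ for which $N/e$ is odd. Each term equals $+e$ when $e$ is odd and $-e$ when $e$ is even. The coefficient is therefore the sum of the odd divisors $e$ with $N/e$ odd minus the sum of the even divisors $e$ with $N/e$ odd, which is precisely $\sigma_{o-e}(N)$ as defined in the statement.

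Multiplying by the factor $2$ and by $\sum_k \overline{p}(k)q^k$, and extracting the coefficient of $q^n$, then gives $\overline{SOME}(n)=2\sum_{k=0}^n\overline{p}(k)\sigma_{o-e}(n-k)$. The only step needing care is the divisor bookkeeping in the reindexing. One must check that the sign $(-1)^{e-1}$ matches the "odd minus even" convention, and that the constraint "$N/e$ odd" comes from restricting to odd denominators $2m-1$. Everything else is routine.
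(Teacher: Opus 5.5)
Your proposal is correct and is essentially the paper's proof: expand $q^{2m-1}/(1+q^{2m-1})^2=\sum_{n\ge1}(-1)^{n-1}n\,q^{n(2m-1)}$, regroup by the exponent to get the coefficient $\sum_{d\mid k,\ k/d\ \text{odd}}(-1)^{d-1}d=\sigma_{o-e}(k)$, and compare coefficients after multiplying by $\sum_m\overline{p}(m)q^m$. Your convention $\sigma_{o-e}(0)=0$, which covers the case $n=0$, is a detail the paper leaves implicit.
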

\begin{corollary}
    For all $n \ge 1$,
\[
\sum_{k=0}^n \overline{p}(k)\sigma_{o-e}(n-k)\ge 0.
\]
\end{corollary}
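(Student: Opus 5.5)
The last corollary follows by combining Theorem~\ref{thm2} with Corollary~\ref{thm12}. For $n\ge 1$, Theorem~\ref{thm2} gives
\[
\sum_{k=0}^n \overline{p}(k)\sigma_{o-e}(n-k)=\tfrac12\,\overline{SOME}(n),
\]
and Corollary~\ref{thm12} says the right-hand side is nonnegative. So the work lies in Corollary~\ref{thm12} and Theorem~\ref{thm2}. For completeness, here is how I would check each of them from Theorem~\ref{thm1}.

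For Theorem~\ref{thm2}, I would expand each summand of \eqref{eq1} as a power series. For odd $m'=2m-1$,
\[
\frac{q^{m'}}{(1+q^{m'})^2}=\sum_{j\ge1}(-1)^{j-1} j\,q^{jm'}.
\]
Summing over odd $m'$, the coefficient of $q^a$ is $\sum (-1)^{j-1} j$, taken over factorizations $a=jm'$ with $m'$ odd. Writing $j=d$ as the divisor and $a/d=m'$ odd, the sign is $+$ when $d$ is odd and $-$ when $d$ is even. This is exactly $\sigma_{o-e}(a)$, so
\[
\sum_{m\ge1}\frac{q^{2m-1}}{(1+q^{2m-1})^2}=\sum_{a\ge1}\sigma_{o-e}(a)q^a .
\]
Multiplying by $2\overline{P}(q)$, where $\overline{P}(q)=(-q;q)_\infty/(q;q)_\infty$, and comparing coefficients gives the convolution. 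Setting $\sigma_{o-e}(0)=0$ handles the edge term.

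For Corollary~\ref{thm12}, the main obstacle is that $\sigma_{o-e}$ takes negative values; for instance $\sigma_{o-e}(2)=-2$. Termwise positivity therefore fails, and I would instead rewrite the sum as a product of series with nonnegative coefficients. Writing $x=q^{2m-1}$, I would use
\[
\frac{x}{(1+x)^2}=\frac{x(1-x)^2}{(1-x^2)^2}.
\]
I would then try to absorb the factor $(1-x)^2=(1-q^{2m-1})^2$ into $\overline{P}(q)$: remove the two factors $1-q^{2m-1}$ from it and keep the remaining product. Explicitly,
\[
\frac{(-q;q)_\infty}{(q;q)_\infty}\cdot\frac{q^{2m-1}}{(1+q^{2m-1})^2}
=\frac{(-q;q)_\infty}{(1+q^{2m-1})^{2}}\cdot\frac{q^{2m-1}}{(q;q)_\infty}.
\]
The factor $(-q;q)_\infty/(1+q^{2m-1})^2$ is not positive as written, but
\[
\frac{(-q;q)_\infty}{1+q^{2m-1}}=\prod_{k\ne 2m-1}(1+q^k)
\]
has nonnegative coefficients. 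The remaining factor $1/((1+q^{2m-1})(q;q)_\infty)$ can be written as
\[
\frac{1-q^{2m-1}}{(1-q^{4m-2})(q;q)_\infty}=\frac{1}{(1-q^{4m-2})\prod_{k\ne 2m-1}(1-q^k)},
\]
and this also has nonnegative coefficients. Each summand is then $q^{2m-1}$ times a product of nonnegative series, which gives $\overline{SOME}(n)\ge0$. The corollary then follows by halving, as above.
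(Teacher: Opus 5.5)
Your deduction of the corollary is the paper's own: Theorem~\ref{thm2} combined with Corollary~\ref{thm12}. Your check of Theorem~\ref{thm2} (expanding $x/(1+x)^2=\sum_{j\ge1}(-1)^{j-1}jx^j$ at $x=q^{2m-1}$ and regrouping by divisors) is also the paper's argument. Where you genuinely differ is in the proof of Corollary~\ref{thm12}. The paper obtains nonnegativity from the closed form \eqref{eq30},
\[
\sum_{n\ge1}\overline{SOME}(n)q^n=\frac{2}{\phi(-q^2)^2}\sum_{k\ge1}k^2q^{k^2}.
\]
This requires both expressions \eqref{Ep1} and \eqref{Ep2} for $q\frac{d}{dq}\log\phi(q)$, together with $\phi(q)\phi(-q)=\phi(-q^2)^2$. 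Positivity is then visible because $1/\phi(-q^2)^2$ is the square of the overpartition generating function with $q$ replaced by $q^2$. You instead show that each summand of \eqref{eq1} separately has nonnegative coefficients, via
\[
\frac{(-q;q)_\infty}{(q;q)_\infty}\cdot\frac{q^{a}}{(1+q^{a})^2}=\frac{q^{a}}{1-q^{2a}}\prod_{k\ne a}\frac{1+q^k}{1-q^k},\qquad a=2m-1,
\]
which is correct. Your argument is more elementary: it uses only the product form of the overpartition generating function and $1/(1+x)=(1-x)/(1-x^2)$, with no theta-function identities. It also gives a decomposition of $\overline{SOME}(n)$ into nonnegative pieces indexed by odd $a$. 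The paper's route costs more, but it produces a global identity that says far more than positivity and drives all of its later congruences. One expository slip: you cannot ``remove the two factors $1-q^{2m-1}$'' from $(-q;q)_\infty/(q;q)_\infty$, because $(q;q)_\infty$ contains that factor only once. What your displayed formulas actually do, correctly, is cancel one $1+q^{2m-1}$ against $(-q;q)_\infty$ and one $1-q^{2m-1}$ against $(q;q)_\infty$.
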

This follows directly from Theorem \ref{thm2} and Corollary \ref{thm12}. 
\begin{theorem}\label{thm3}
For each $n \ge 1$, we have
\begin{equation}\label{eq24}
\overline{S}_{b}(n) = 2b\sum_{k=0}^{\lfloor{\frac{n-b}{2b}}\rfloor} \overline{p}(n-(2k+1)b),
\end{equation}
where $\overline{S}_{b}(n)$ denotes the sum of all parts equal to $b$ appearing in all overpartitions of n.
\end{theorem}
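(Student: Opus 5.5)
The plan is to compute the generating function of $\overline{S}_b(n)$ by marking parts equal to $b$, and then read off coefficients. In the overpartition generating function
\[
\frac{(-q;q)_\infty}{(q;q)_\infty}=\prod_{m\ge1}\frac{1+q^m}{1-q^m},
\]
the factor for the part size $b$ is $\frac{1+q^b}{1-q^b}$. We replace it by the bivariate factor $\frac{1+zq^b}{1-zq^b}$, so that the exponent of $z$ records the number of parts equal to $b$ (overlined or not). The total contribution of parts equal to $b$ is then $b$ times the $z$-derivative at $z=1$. This gives
\[
\sum_{n\ge1}\overline{S}_b(n)q^n=b\,\frac{\partial}{\partial z}\Bigl[\frac{1+zq^b}{1-zq^b}\Bigr]_{z=1}\prod_{m\ne b}\frac{1+q^m}{1-q^m}.
\]

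Next we compute the derivative. We have
\[
\frac{\partial}{\partial z}\frac{1+zq^b}{1-zq^b}=\frac{2q^b}{(1-zq^b)^2},
\]
so at $z=1$ it equals $\frac{2q^b}{(1-q^b)^2}$. Dividing by the $b$-th factor $\frac{1+q^b}{1-q^b}$ to restore the full product gives the multiplier
\[
\frac{2q^b}{(1-q^b)(1+q^b)}=\frac{2q^b}{1-q^{2b}}.
\]
Hence
\[
\sum_{n\ge1}\overline{S}_b(n)q^n=2b\,\frac{q^b}{1-q^{2b}}\,\frac{(-q;q)_\infty}{(q;q)_\infty}=2b\sum_{k\ge0}q^{(2k+1)b}\sum_{j\ge0}\overline{p}(j)q^j.
\]

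Finally, extracting the coefficient of $q^n$ gives $2b\sum_{k}\overline{p}(n-(2k+1)b)$. Here $k$ ranges over $(2k+1)b\le n$, that is, $0\le k\le\lfloor\frac{n-b}{2b}\rfloor$, which is \eqref{eq24}. The sum is empty, and $\overline{S}_b(n)=0$, when $n<b$.

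The only step needing care is justifying the marking. One must check that $\frac{1+zq^b}{1-zq^b}=1+2\sum_{r\ge1}z^rq^{rb}$ correctly counts overpartitions with exactly $r$ copies of $b$. It does: there are two choices, depending on whether the first occurrence is overlined. So differentiating at $z=1$ weights each overpartition by its number of parts equal to $b$, and multiplying by $b$ gives the sum of those parts. As an alternative check, there is a combinatorial route. Pair overpartitions having $b$ overlined or not, and remove $2k+1$ copies of $b$. However, the generating function argument above is cleaner, and I would present that.
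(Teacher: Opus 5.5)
Your proof is correct and takes the same route as the paper: mark the parts equal to $b$ in the factor $\frac{1+q^b}{1-q^b}$, differentiate at $z=1$ to get the multiplier $\frac{2bq^b}{1-q^{2b}}$, and read off the coefficient of $q^n$. The only cosmetic difference is that the paper writes $(zq)^b$, so the factor $b$ comes out of the derivative automatically, while you use $zq^b$ and multiply by $b$ explicitly; you also verify the marking through $\frac{1+zq^b}{1-zq^b}=1+2\sum_{r\ge1}z^rq^{rb}$, which the paper leaves implicit.
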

The following corollary follows directly from the above theorem.
\begin{corollary}
    For each $n \ge 1$, we have
\[
\overline{S}_{b}(n)\equiv 0 \pmod{2b}.
\]
\end{corollary}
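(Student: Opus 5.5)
The statement immediately above is the corollary $\overline{S}_{b}(n)\equiv 0 \pmod{2b}$. It follows at once from Theorem \ref{thm3}: the right-hand side of \eqref{eq24} is $2b$ times a finite sum of integers $\overline{p}(m)$, so it is an integer multiple of $2b$. If $n<b$, the sum is empty (we take the upper limit $\lfloor (n-b)/(2b)\rfloor<0$ to mean no terms), and $\overline{S}_b(n)=0$ trivially. Nothing more is needed once Theorem \ref{thm3} is available, so the real work is to establish \eqref{eq24}. I outline that below.

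To prove Theorem \ref{thm3}, I would introduce a marking variable $z$ that records the total size of the parts equal to $b$.

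\textbf{Step 1: the part $b$.} In an overpartition, the parts equal to $b$ contribute the factor
\[
\frac{1+zq^b}{1-zq^b},
\]
since the first copy may or may not be overlined. Every other part contributes its usual factor, independent of $z$.

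\textbf{Step 2: differentiate.} Apply $z\frac{\partial}{\partial z}$ and then set $z=1$. The logarithmic derivative of the factor above is
\[
\frac{zq^b}{1+zq^b}+\frac{zq^b}{1-zq^b}=\frac{2zq^b}{1-z^2q^{2b}}.
\]
This operation counts the number of parts equal to $b$, weighted over all overpartitions. Multiplying by $b$ converts the count into the sum of those parts. Hence
\[
\sum_{n\ge1}\overline{S}_b(n)q^n=\frac{(-q;q)_\infty}{(q;q)_\infty}\cdot\frac{2bq^b}{1-q^{2b}}.
\]

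\textbf{Step 3: read off coefficients.} Expand
\[
\frac{q^b}{1-q^{2b}}=\sum_{k\ge0}q^{(2k+1)b}.
\]
Extracting the coefficient of $q^n$ in the product then gives
\[
\overline{S}_b(n)=2b\sum_{k\ge0,\ (2k+1)b\le n}\overline{p}\bigl(n-(2k+1)b\bigr).
\]
The condition $(2k+1)b\le n$ is exactly $k\le\lfloor (n-b)/(2b)\rfloor$, which is \eqref{eq24}.

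The only point that needs care is Step 1: the overline option must be placed correctly in the factor for $b$. The combination $(1+zq^b)/(1-zq^b)$ produces the term $2/(1-q^{2b})$ at $z=1$, and this is the source of the factor $2$ in \eqref{eq24}. Once Step 1 is right, the rest is routine.
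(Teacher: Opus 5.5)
Your proposal is correct and follows the paper's route. The corollary is read off from Theorem \ref{thm3}, which is proved by marking the parts equal to $b$, logarithmically differentiating the factor $(1+zq^b)/(1-zq^b)$ at $z=1$ to obtain $\frac{(-q;q)_\infty}{(q;q)_\infty}\cdot\frac{2bq^b}{1-q^{2b}}$, and expanding $q^b/(1-q^{2b})$ geometrically. The only cosmetic difference is that the paper marks with $(zq)^b$ and uses $\partial/\partial z$, so the factor $b$ appears automatically, whereas you use $zq^b$, apply $z\,\partial/\partial z$, and multiply by $b$ afterward; your explicit treatment of the empty sum when $n<b$ is a welcome addition.
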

\begin{theorem}\label{thm4}
    For each $n\geq 0$, we have
    \begin{align}
    \overline{SOME}(4n+3)&\equiv 0\pmod{8}, \label{eq2} \\
        \overline{SOME}(8n+7)&\equiv 0\pmod{64} \label{eq3}.
    \end{align}
\end{theorem}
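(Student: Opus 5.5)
The plan is to work directly from the generating function in Theorem~\ref{thm1}. First I will rewrite both factors in terms of Ramanujan's theta functions $\varphi(q)=\sum_{n\in\mathbb{Z}}q^{n^2}$ and $\psi(q)=\sum_{n\ge0}q^{n(n+1)/2}$. For the overpartition factor I use the classical identity
\[
\frac{(-q;q)_\infty}{(q;q)_\infty}=\frac{1}{\varphi(-q)}=\frac{\varphi(q)}{\varphi(-q^2)^2}.
\]
For the Lambert series, expanding $q^{m}/(1+q^{m})^2=\sum_{k\ge1}(-1)^{k-1}kq^{mk}$ over odd $m$ shows (as in Theorem~\ref{thm2}) that
\[
S(q):=\sum_{m\ge1}\frac{q^{2m-1}}{(1+q^{2m-1})^2}=\sum_{N\ge1}\sigma_{o-e}(N)q^N .
\]
Writing $N=2^a o$ with $o$ odd, the condition that $N/d$ is odd forces $d=2^a d'$ with $d'\mid o$. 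Hence $\sigma_{o-e}(N)=\sigma(o)$ if $a=0$, and $\sigma_{o-e}(N)=-2^a\sigma(o)$ if $a\ge1$. I will therefore express $S(q)$ through $T(q):=q\psi(q^2)^4=\sum_{o\text{ odd}}\sigma(o)q^o$ as
\[
S(q)=T(q)-\sum_{a\ge1}2^aT(q^{2^a}).
\]
Alternatively, I will seek a closed form via the Jacobi-type identities for $\varphi(\pm q)^4$ and $\psi(q)^4$, e.g.\ as a combination such as $T(q)-2T(q^2)-4\,\cdot(\text{even part})$. The point is that the terms in powers $q^{2^a o}$ with $a\ge2$ carry a factor $4$ or more, and will be tracked modulo $8$ and $64$ separately.

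Next I 2-dissect. Using $\varphi(q)=\varphi(q^4)+2q\psi(q^8)$ in the numerator $\varphi(q)$ of $1/\varphi(-q)$, and noting that $T(q)$ is supported on odd exponents while the remaining part of $S$ is supported on even exponents, I extract the odd part of $2S(q)/\varphi(-q)$. After $q\to q^{1/2}$ this gives a generating function for $\overline{SOME}(2n+1)$ of the shape
\[
\frac{2}{\varphi(-q)^2}\Bigl(\varphi(q^2)\,\psi(q)^4+4q\,\psi(q^4)\,\cdot(\text{even part of }S)\Bigr).
\]
Then I dissect again to isolate exponents $\equiv1$ modulo $2$ in this new series, i.e.\ $4n+3$ in the original, and once more for $8n+7$. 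Here I will use $\psi(q)^2=\psi(q^2)\varphi(q)$-type relations together with $\varphi(q)\varphi(-q)=\varphi(-q^2)^2$. The factor $8=2\cdot4$ for \eqref{eq2} should come from the overall factor $2$ together with a factor $4$. That $4$ appears either from the $4q\psi(q^4)$-type terms or from $\psi(q)^4$ odd parts: since $\psi(q)^2=\varphi(q^4)\psi(q^2)+\dots$, odd parts of $\psi(q)^4$ pick up the factor $2\cdot2$.

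For \eqref{eq3} I will mimic Hirschhorn--Sellers' proof of $\overline{p}(8n+7)\equiv0\pmod{64}$. Their proof writes $1/\varphi(-q)$ as $\varphi(q)^{2^k-1}/\varphi(-q^{2^k})^{2^k}$-type expressions and reduces modulo $64$ using $\varphi(q)^{2^j}\equiv\varphi(q^2)^{2^{j-1}}\pmod{2^{j+1}}$. I will then handle the additional factor $S(q)$ by reducing its dissected pieces modulo $32$ as well. The cleaner route I would try first is to use Theorem~\ref{thm2}, writing $\overline{SOME}(n)=2\sum_k\overline{p}(k)\sigma_{o-e}(n-k)$, and to split the convolution according to $k$ modulo $8$. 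Combined with known congruences for $\overline{p}$ modulo powers of $2$ (e.g.\ $\overline{p}(n)\equiv0\pmod 2$ for $n\ge1$, with the $2$-adic valuation controlled by the number of representations as sums of squares), and with $\sigma(o)\equiv$ (number of divisors) parity facts, this reduces matters to sums of products of valuations.

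The main obstacle is the modulus $64$. The single dissection step that suffices for $8$ will not suffice there, and I must control the $2$-adic valuation of both factors simultaneously through three levels of 2-dissection. I expect the key point to be a closed theta-function form for $S(q)$. The natural candidate is
\[
S(q)=\frac{\varphi(q)^4-\varphi(-q)^4}{16}-\Bigl(\text{analogous terms in }q^2\Bigr),
\]
which lets the whole generating function be written as a rational expression in $\varphi$ and $\psi$. The final $8n+7$ extraction then becomes a mechanical, if lengthy, computation modulo $64$.
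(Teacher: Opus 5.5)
Your reduction $S(q)=T(q)-\sum_{a\ge1}2^aT(q^{2^a})$ with $T(q)=q\psi(q^2)^4$ is correct, and your dissection scheme does prove \eqref{eq2} once it is made precise. Write $S(q)=T(q)+U(q^2)$. Then
\[
\sum_{n\ge0}\overline{SOME}(2n+1)q^n=\frac{2\phi(q)^2}{\phi(-q^2)^4}\Bigl(\phi(q^2)\psi(q)^4+2\psi(q^4)U(q)\Bigr).
\]
Since $U\equiv0\pmod 2$, and the odd parts of $\phi(q)^2$ and of $\psi(q)^4=\phi(q)^2\psi(q^2)^2$ each carry a factor $4$, the odd part of $\phi(q)^2(\cdots)$ is divisible by $4$, which gives the modulus $8$.

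The gap is the modulus $64$, which you rightly call the main obstacle but then rest on a step that fails. The ``analogous terms in $q^2$'' never terminate, since every $T(q^{2^a})$ contributes. No finite theta-product expression can replace them: $2S(q)=q\frac{d}{dq}\log\phi(q)$ is a logarithmic derivative, quasimodular with an $E_2$-type component, not a theta quotient. So the claim that the $8n+7$ extraction becomes mechanical has nothing underneath it. The convolution route through Theorem~\ref{thm2} is not specified. The congruence $\phi(q)^{2^j}\equiv\phi(q^2)^{2^{j-1}}\pmod{2^{j+1}}$ you quote for the Hirschhorn--Sellers imitation is false already for $j=1$; the correct form is $\phi(q)^{2^j}\equiv\phi(q^2)^{2^j}\pmod{2^{j+1}}$.

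The idea you are missing is the paper's Lemma:
\[
q\frac{d}{dq}\log\phi(q)=2\sum_{m\ge1}\frac{q^{2m-1}}{(1+q^{2m-1})^2}=\frac{2\sum_{k\ge1}k^2q^{k^2}}{\phi(q)}.
\]
Combined with $\frac{(-q;q)_\infty}{(q;q)_\infty}=\frac{1}{\phi(-q)}$ and $\phi(q)\phi(-q)=\phi(-q^2)^2$, it gives $\sum\overline{SOME}(n)q^n=\frac{2}{\phi(-q^2)^2}\sum_{k\ge1}k^2q^{k^2}$, whose prefactor is already even in $q$. Odd $n$ then select odd $k$. Two applications of $\phi(q)^2=\phi(q^2)^2+4q\psi(q^4)^2$, each after writing $\phi(-q)^{-2^j}=\phi(q)^{2^j}/\phi(-q^2)^{2^{j+1}}$, give the exact identities \eqref{pf2} and \eqref{pf3} with the factors $8$ and $64$ visible.

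Alternatively, you can finish your own route with no closed form at all, using the self-similarity of the tail, $U(q)=-2q\psi(q^2)^4+2U(q^2)$. Feeding this exact odd/even split of $U$, together with $\psi(q)^2=\phi(q)\psi(q^2)$, into your second dissection yields
\[
\sum_{n\ge0}\overline{SOME}(4n+3)q^n=\frac{8\psi(q^2)^3}{\phi(-q)^4}\bigl(\phi(q)^4+4U(q)\bigr).
\]
Now use $\phi(-q)^{-4}=\phi(q)^4/\phi(-q^2)^8$, $\phi(q)^4\equiv1\pmod 8$ and $U\equiv0\pmod 2$. These give $\phi(q)^4\bigl(\phi(q)^4+4U(q)\bigr)\equiv1\pmod 8$, so its odd part vanishes modulo $8$, and \eqref{eq3} follows.
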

Let \(\overline{S_o}(n)\) (respectively \(\overline{S_e}(n)\)) denote the sum of the odd (respectively even) parts in the overpartitions of \(n\). 
\begin{corollary}\label{thm41}
    For each $n\geq 0$, we have
    \begin{align}
        \overline{S_o}(4n+3) &\equiv \overline{S_e}(4n+3) \equiv 0 \pmod{8}, \label{eq02} \\
        \overline{S_o}(8n+7) &\equiv \overline{S_e}(8n+7) \equiv 0 \pmod{64}.\label{eq03}
    \end{align}
\end{corollary}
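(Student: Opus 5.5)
The plan is to derive Corollary \ref{thm41} from Theorem \ref{thm4} together with a congruence for the total sum of parts. By definition, for every $n$,
\[
\overline{SOME}(n)=\overline{S_o}(n)-\overline{S_e}(n).
\]
Let $\overline{S}(n)=\overline{S_o}(n)+\overline{S_e}(n)$ be the sum of all parts over all overpartitions of $n$. Every overpartition of $n$ has parts summing to $n$, so
\[
\overline{S}(n)=n\,\overline{p}(n).
\]
Adding and subtracting these two identities gives
\[
2\overline{S_o}(n)=n\overline{p}(n)+\overline{SOME}(n),\qquad 2\overline{S_e}(n)=n\overline{p}(n)-\overline{SOME}(n).
\]
Because of the factor $2$, this route only gives congruences modulo $4$ and $32$, which is one power of $2$ short. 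So I would instead work with the generating functions directly.

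Following the method behind Theorem \ref{thm1}, with the weight $+1$ on every part, the generating function for $\overline{S_o}(n)$ is
\[
\frac{(-q;q)_\infty}{(q;q)_\infty}\sum_{m\ \mathrm{odd}} m\Big(\frac{q^m}{1-q^m}+\frac{q^m}{1+q^m}\Big)
=\frac{(-q;q)_\infty}{(q;q)_\infty}\sum_{m\ \mathrm{odd}}\frac{2mq^m}{1-q^{2m}}.
\]
The same computation over even $m$ gives $\overline{S_e}(n)$. Combinatorially, this is Theorem \ref{thm3} summed over $b$: $\overline{S_o}(n)=\sum_{b\ \mathrm{odd}}\overline{S}_b(n)$, and each $\overline{S}_b(n)$ is $2b$ times a sum of values of $\overline{p}$. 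The factor $2$ therefore appears explicitly, so
\[
\overline{S_o}(n)=2\sum_{k\ge 0}\overline{p}(n-k)\,\sigma_o^{*}(k),
\]
where $\sigma_o^{*}(k)$ is the sum of the odd divisors $d$ of $k$ with $k/d$ odd. For odd $k$, $\sigma_o^{*}(k)=\sigma(k)$; for even $k$ it is $0$. The analogous formula holds for $\overline{S_e}(n)$.

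The next step is to show that $\overline{S_o}(4n+3)\equiv 0\pmod 8$ and $\overline{S_o}(8n+7)\equiv 0\pmod{64}$. It then suffices to prove one of the two sums; the other follows from Theorem \ref{thm4}, since $\overline{S_e}=\overline{S_o}-\overline{SOME}$. I would instead use $\overline{S_o}+\overline{S_e}=n\overline{p}(n)$ together with the Hirschhorn--Sellers congruences $\overline{p}(4n+3)\equiv 0\pmod 8$ and $\overline{p}(8n+7)\equiv 0\pmod{64}$. These give $(4n+3)\overline{p}(4n+3)\equiv 0\pmod 8$ and $(8n+7)\overline{p}(8n+7)\equiv 0\pmod{64}$, because $4n+3$ and $8n+7$ are odd multipliers. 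Combined with Theorem \ref{thm4}, this yields
\[
2\overline{S_o}\equiv 2\overline{S_e}\equiv 0
\]
modulo $16$ and $128$ respectively. That is only enough for modulo $8$ and $64$ if something more holds, namely $\overline{SOME}\equiv 0\pmod{16}$ (respectively $\pmod{128}$) or the analogous statement for $n\overline{p}(n)$.

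Since that is not available, the real step is to handle $\overline{S_o}$ directly from its generating function. Write
\[
\sum\overline{S_o}(n)q^n=2\,\frac{(-q;q)_\infty}{(q;q)_\infty}\,A(q),\qquad A(q)=\sum_{k\ \mathrm{odd}}\sigma(k)q^k.
\]
It then suffices to show that
\[
\frac{(-q;q)_\infty}{(q;q)_\infty}A(q)=\frac{1}{\varphi(-q)}A(q)
\]
has its coefficients of $q^{4n+3}$ divisible by $4$ and of $q^{8n+7}$ divisible by $32$.

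For the $2$-dissection I would use two facts. First, $A(q)=q\psi(q^2)^4$, a classical result: the odd part of the generating function of $\sigma(n)$ is $q\psi(q^2)^4$. Second,
\[
\frac{1}{\varphi(-q)}=\frac{\varphi(q)}{\varphi(-q^2)^2},
\]
with $\varphi(q)=\varphi(q^4)+2q\psi(q^8)$. Extracting the odd powers gives
\[
\frac{q\psi(q^2)^4\varphi(q^4)}{\varphi(-q^2)^2}.
\]
After $q\to q^{1/2}$, one more dissection isolates the terms $q^{4n+3}$; the factor $2q\psi(q^8)$, or the dissection of $\psi(q)^4$, supplies the needed powers of $2$. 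I would mirror the paper's own proof of Theorem \ref{thm4} here, since the dissections are identical except that the Lambert series is replaced by $\psi^4$.

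The main obstacle is the modulus $32$ for the $8n+7$ progression. It requires one further dissection and congruences such as $\varphi(q)^{8}\equiv 1\pmod{16}$-type reductions, i.e.\ $\varphi(q)^{2^k}\equiv\varphi(q^2)^{2^{k-1}}\pmod{2^{k+1}}$.
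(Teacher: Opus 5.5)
Your route is genuinely different from the paper's, and your diagnosis of the paper's route is correct.

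\textbf{The gap in the paper's argument.} The paper proves this corollary only from the two relations $\overline{S_o}(n)+\overline{S_e}(n)=n\,\overline{p}(n)$ and $\overline{S_o}(n)-\overline{S_e}(n)=\overline{SOME}(n)$, followed by ``adding and subtracting''. As you observe, that yields only $2\overline{S_o}(4n+3)\equiv 2\overline{S_e}(4n+3)\equiv 0\pmod 8$. This is divisibility by $4$, and likewise only by $32$ for $8n+7$. The paper never supplies the missing mod-$16$ (resp.\ mod-$128$) relation between $(4n+3)\overline{p}(4n+3)$ and $\overline{SOME}(4n+3)$. Your plan is to dissect $\sum\overline{S_o}(n)q^n=2q\psi(q^2)^4/\phi(-q)$ directly and then recover $\overline{S_e}=\overline{S_o}-\overline{SOME}$ from Theorem \ref{thm4}. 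That is what actually proves the statement.

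\textbf{The $8n+7$ step needs no congruence reductions.} This matters, because the reduction you wrote, $\phi(q)^{2^k}\equiv\phi(q^2)^{2^{k-1}}\pmod{2^{k+1}}$, is false already for $k=1$: $\phi(q)^2\equiv 1\pmod 4$, whereas $\phi(q^2)=1+2q^2+\cdots$. Instead, use the classical identity $\psi(q)^2=\phi(q)\psi(q^2)$ together with \eqref{eq27} to write everything in powers of $\phi(q)$, and then apply \eqref{eq29}. After your first extraction,
\[
\sum_{n\ge0}\overline{S_o}(2n+1)q^n=\frac{2\psi(q)^4\phi(q^2)}{\phi(-q)^2}=\frac{2\phi(q)^4\phi(q^2)\psi(q^2)^2}{\phi(-q^2)^4}.
\]
The odd part of $\phi(q)^4=(\phi(q^2)^2+4q\psi(q^4)^2)^2$ is $8q\phi(q^2)^2\psi(q^4)^2$. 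Hence
\[
\sum_{n\ge0}\overline{S_o}(4n+3)q^n=16\,\frac{\phi(q)^3\psi(q)^2\psi(q^2)^2}{\phi(-q)^4}=16\,\frac{\phi(q)^8\psi(q^2)^3}{\phi(-q^2)^8}.
\]
The odd part of $\phi(q)^8$ is $16q\phi(q^2)^6\psi(q^4)^2+256q^3\phi(q^2)^2\psi(q^4)^6$, so
\[
\sum_{n\ge0}\overline{S_o}(8n+7)q^n=256\,\frac{\psi(q)^3}{\phi(-q)^8}\left(\phi(q)^6\psi(q^2)^2+16q\,\phi(q)^2\psi(q^2)^6\right).
\]
Thus $\overline{S_o}(4n+3)\equiv 0\pmod{16}$ and $\overline{S_o}(8n+7)\equiv 0\pmod{256}$. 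For example, $\overline{S_o}(3)=16$, $\overline{S_o}(7)=256$ and $\overline{S_o}(15)=256\cdot 47$. Subtracting $\overline{SOME}$ and using Theorem \ref{thm4} then gives the moduli $8$ and $64$ for $\overline{S_e}$. These are sharp, since $\overline{S_e}(3)=8$ and $\overline{S_e}(7)=192$.

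\textbf{Comparison.} The paper's approach, had it worked, would be a two-line consequence of Theorem \ref{thm4} and the Hirschhorn--Sellers congruences. Yours costs two more $2$-dissections, but it is valid. It also yields stronger congruences for $\overline{S_o}$ than the corollary asks for.
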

\begin{theorem} \label{thm5}
For each $n\geq 0$, we have
    \begin{align}
        \overline{SOME}(16n)&\equiv 2^2 \overline{SOME}(4n) \pmod{2^7}, \label{eq4}\\
        \overline{SOME}(32n)&\equiv 2^2 \overline{SOME}(8n) \pmod{2^9}.\label{eq5}
    \end{align}
\end{theorem}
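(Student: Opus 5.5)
The plan is to rewrite the generating function of Theorem~\ref{thm1} in terms of Ramanujan's theta functions
\[
\varphi(q)=\sum_{n=-\infty}^{\infty}q^{n^2},\qquad \psi(q)=\sum_{n=0}^{\infty}q^{n(n+1)/2},
\]
and then perform repeated $2$-dissections. After that, one extracts the terms $q^{4n}$, $q^{16n}$ (respectively $q^{8n}$, $q^{32n}$) and compares the resulting series modulo $2^7$ (respectively $2^9$).

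\textbf{Step 1: theta-function form.} First, $\frac{(-q;q)_\infty}{(q;q)_\infty}=\frac{1}{\varphi(-q)}$. Second, by Theorem~\ref{thm2} (or by expanding each summand as a geometric series), the Lambert series
\[
G(q)=\sum_{m\ge1}\frac{q^{2m-1}}{(1+q^{2m-1})^2}
\]
is $\sum_{N\ge 1}\sigma_{o-e}(N)q^N$. I would identify $G$ with a combination of the weight-two forms $\varphi(q)^4$, $\varphi(-q)^4$, $\varphi(q^2)^4$ and $q\psi(q^2)^4$. The inputs are Jacobi's identity $\varphi(q)^4=1+8\sum_{n\ge1}\frac{nq^n}{1+(-q)^n}$ and the identity $q\psi(q^2)^4=\sum_{n \text{ odd}}\frac{nq^n}{1-q^{2n}}$. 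Splitting into $d$ odd and $d$ even in $\sigma_{o-e}$ should give $G$ as a linear combination of these forms, with denominators at most $8$. Once the factor $2$ in front is included, the whole generating function becomes $\frac{2}{\varphi(-q)}G(q)$, where $2G(q)$ is a theta quotient with controlled $2$-adic coefficients.

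\textbf{Step 2: iterated $2$-dissections.} I would use the standard dissections
\[
\varphi(q)=\varphi(q^4)+2q\psi(q^8),\qquad \frac{1}{\varphi(-q)}=\frac{\varphi(q)}{\varphi(-q^2)^2},
\]
together with $\varphi(q)^2=\varphi(q^2)^2+4q\psi(q^4)^2$ and $\psi(q)^2=\psi(q^2)\varphi(q)$ as needed. With these I compute the even part of the generating function, replace $q^2$ by $q$, and iterate to obtain closed forms for
\[
\sum_n\overline{SOME}(4n)q^n,\qquad \sum_n\overline{SOME}(8n)q^n,\qquad \sum_n\overline{SOME}(16n)q^n,\qquad \sum_n\overline{SOME}(32n)q^n.
\]
Each of these will be a polynomial in $\varphi(q),\varphi(q^2),\psi(q^2),\dots$ divided by a power of $\varphi(-q)$. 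Every appearance of $\psi$ carries a factor $2$ or $4$, which supplies the $2$-adic weight.

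\textbf{Step 3: comparison modulo $2^7$ and $2^9$.} I would express $\sum\overline{SOME}(16n)q^n-4\sum\overline{SOME}(4n)q^n$ in terms of the same basic theta functions. The goal is to show that every surviving term carries a coefficient divisible by $2^7$, after reducing powers with congruences such as
\[
\varphi(q)^{2^k}\equiv\varphi(q^2)^{2^{k}}\pmod{2^{k+2}},
\]
which comes from $\varphi(q)^2=\varphi(q^2)^2+4q\psi(q^4)^2$. The same procedure, carried one dissection further, handles \eqref{eq5} modulo $2^9$.

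The main obstacle is the bookkeeping in Step 3. The factor $4$ on the right side must emerge exactly, so binomial expansions of $(\varphi(q^4)+2q\psi(q^8))^k$ must be tracked to enough $2$-adic precision. I expect to need $\varphi(-q)^{-1}\equiv\varphi(q)^{-1}$-type simplifications modulo $2^j$. I would first try to write everything in the basis $\varphi(q)$, $q\psi(q^2)^2$, so that both sides become explicit polynomials in a single auxiliary series whose coefficients can be compared directly.
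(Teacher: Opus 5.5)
Your overall plan is the paper's plan: write the generating function in theta functions, dissect repeatedly with $\varphi(q)\varphi(-q)=\varphi(-q^2)^2$ and $\varphi(q)^2=\varphi(q^2)^2+4q\psi(q^4)^2$, get closed forms at the levels $4n,8n,16n,32n$, and compare. But the Step 1 identification that everything rests on is false.

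\textbf{The gap in Step 1.} $G(q)=\sum_{N\ge1}\sigma_{o-e}(N)q^N$ is not a linear combination of $\varphi(q)^4,\varphi(-q)^4,\varphi(q^2)^4,q\psi(q^2)^4$. In each of these four series the coefficients of $q^4$ and $q^8$ are equal: they are $24,24$; $24,24$; $r_4(2)=24$, $r_4(4)=24$; and $0,0$. So every linear combination has equal coefficients there too. However, $\sigma_{o-e}(4)=-4$ and $\sigma_{o-e}(8)=-8$.

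Your odd/even split does correctly give the odd-$d$ part as $q\psi(q^2)^4$. The even-$d$ part, however, has coefficient $-2^a\sigma(m)$ at $q^{2^am}$ ($m$ odd, $a\ge1$). Hence $G(q)=F(q)-\sum_{a\ge1}2^aF(q^{2^a})$ with $F(q)=q\psi(q^2)^4$, which has an infinite tail. This is not an accident: $2G=q\frac{d}{dq}\log\varphi(q)$ is the logarithmic derivative of a weight-$1/2$ form. It is therefore quasimodular with a nonzero $E_2$-component, so it is neither a finite combination of holomorphic weight-two forms nor a theta quotient. Steps 2 and 3 consequently have no correct closed form to act on.

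\textbf{A second error in Step 3.} The congruence you plan to use is off by a factor of $2$. The coefficient of $q$ in $\varphi(q)^{2^k}-\varphi(q^2)^{2^k}$ is $2^{k+1}$, so only $\varphi(q)^{2^k}\equiv\varphi(q^2)^{2^k}\pmod{2^{k+1}}$ holds. The modulus $2^{k+2}$ is correct with $\varphi(-q)$ in place of $\varphi(q^2)$, since $\varphi(q)^2-\varphi(-q)^2=8q\psi(q^4)^2$.

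\textbf{The missing idea.} You need the logarithmic-derivative identity
\[
2G(q)=q\frac{d}{dq}\log\varphi(q)=\frac{2\sum_{k\ge1}k^2q^{k^2}}{\varphi(q)}.
\]
Together with $\varphi(q)\varphi(-q)=\varphi(-q^2)^2$, it gives $\sum_{n\ge1}\overline{SOME}(n)q^n=\frac{2}{\varphi(-q^2)^2}\sum_{k\ge1}k^2q^{k^2}$. Now the denominator is already a function of $q^2$, and the numerator dissects by the parity of $k$, so every dissection is exact.

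From this one gets $\sum_n\overline{SOME}(4n)q^n=8\frac{\varphi(q)^2}{\varphi(-q)^4}\sum_{k\ge1}k^2q^{k^2}$. Two further dissections (the first uses the cube of $\varphi(q)^2=\varphi(q^2)^2+4q\psi(q^4)^2$) give $\sum_n\overline{SOME}(16n)q^n\equiv 2^5\varphi(q)^{-2}\sum_{k\ge1}k^2q^{k^2}\pmod{2^7}$. The first congruence then reduces to $\varphi(q)^4\equiv\varphi(-q)^4\pmod 4$. The second congruence is handled the same way at the levels $8n$ and $32n$ modulo $2^9$, where one only needs $\varphi(\pm q)^2\equiv 1\pmod 4$.

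Finally, your Step 3 only states the goal. Even with the correct starting formula, these explicit closed forms and the final comparison are the actual content of the proof and must be written out.
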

\begin{theorem}\label{thm6}
    For each $n\ge 0$ and $k\in\{1,2,3,4\}$, we have
    \begin{equation}\label{eq6}
        \overline{SOME}(2^{2k-1}n)\equiv 0\pmod{2^{2k+1}}.
    \end{equation}
\end{theorem}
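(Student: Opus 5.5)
The plan is to prove the cases $k=1$ and $k=2$ directly from the generating function in Theorem \ref{thm1}, and then get $k=3,4$ (and part of $k=2$) from Theorem \ref{thm5}. The reduction works as follows.

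\begin{itemize}
\item For $k=3$: by \eqref{eq5}, $\overline{SOME}(32n)\equiv 4\,\overline{SOME}(8n)\pmod{2^9}$. So $\overline{SOME}(8n)\equiv 0\pmod{2^5}$ gives $\overline{SOME}(32n)\equiv 0\pmod{2^7}$.
\item For $k=4$: write $128n=32\cdot 4n$ and apply \eqref{eq5} again, giving $\overline{SOME}(128n)\equiv 4\,\overline{SOME}(32n)\equiv 0 \pmod{2^9}$.
\item For $k=2$ with $n$ even, say $8n=16m$: \eqref{eq4} gives $\overline{SOME}(16m)\equiv 4\,\overline{SOME}(4m)\pmod{2^7}$, and the $k=1$ case gives $\overline{SOME}(4m)\equiv0\pmod 8$. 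Hence $\overline{SOME}(16m)\equiv 0 \pmod{32}$.
\end{itemize}

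What remains is to prove two congruences:
\[
\overline{SOME}(2n)\equiv 0 \pmod 8 \quad\text{and}\quad \overline{SOME}(16n+8)\equiv 0\pmod{32}.
\]

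For these I would first put the generating function into theta-function form. Write $F(q)=\sum_{m\ge1} q^{2m-1}/(1+q^{2m-1})^2=\sum_n\sigma_{o-e}(n)q^n$, as in Theorem \ref{thm2}. Using
\[
\frac{x}{(1+x)^2}=\frac{x}{(1-x)^2}-\frac{4x^2}{(1-x^2)^2},
\]
we get $F(q)=A(q)-4A(q^2)$, where $A(q)=\sum_{m \text{ odd}} q^m/(1-q^m)^2$. Next I would express $A$ through classical Lambert series such as $q\psi(q^2)^4=\sum_{m\text{ odd}} mq^m/(1-q^{2m})$ and $\varphi(q)^4$. The target is a closed form of $F$ as a combination of $\varphi$ and $\psi$ products with explicit powers of $2$ in front.

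Alongside this, I would write $(-q;q)_\infty/(q;q)_\infty=1/\varphi(-q)=\varphi(q)/\varphi(-q^2)^2$ and 2-dissect using
\[
\varphi(q)=\varphi(q^4)+2q\psi(q^8), \qquad \psi(q)^2=\ldots,\qquad \varphi(q)^2=\varphi(q^2)^2+4q\psi(q^4)^2.
\]
Extracting the even part of $2F(q)/\varphi(-q)$ and replacing $q^2\to q$ should give an expression whose terms all carry a factor $8$, after using $\varphi(-q)\equiv 1\pmod 2$-type reductions such as $\varphi(q)^{2^j}\equiv\varphi(q^2)^{2^{j-1}}\pmod{2^{j+1}}$. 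That would prove $k=1$.

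For $\overline{SOME}(16n+8)$ I would continue the dissection. From the even part I would take the terms of index $\equiv 0\pmod 8$ and then the odd subprogression at the last step. Throughout, I would keep track of the $2$-adic valuation of each coefficient, and discard terms already divisible by $32$ using binomial expansions of $\varphi(q^4)+2q\psi(q^8)$ modulo $2^5$.

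I expect the main obstacle to be this last multi-step dissection to level $16n+8$. The risk is that cross terms of modest $2$-adic valuation must cancel rather than be individually divisible by $32$. I would try first to choose the theta representation of $F$ so that the factor $\varphi(-q)$ in the denominator combines with a factor from $F$, ideally leaving $F/\varphi(-q)$ as a single product such as $q\psi(q^2)^4/\varphi(-q)$ plus a $4\times$ correction. That would keep the number of terms to track small. If it fails, I would fall back on Theorem \ref{thm2}, reducing the convolution of $\overline{p}$ with $\sigma_{o-e}$ modulo $32$. That route would use known results $\overline{p}(n)\equiv 0 \pmod 2$ for $n\ge1$ with higher divisibility governed by the number of representations of $n$ as a sum of squares.
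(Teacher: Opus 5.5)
\textbf{There is a genuine gap: the two base congruences are not proved, and the route sketched for them aims at an identity that does not exist.}

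Your reductions are correct. Granting Theorem~\ref{thm5}, \eqref{eq5} lifts $k=2$ to $k=3$, and $k=3$ (applied at $4n$) to $k=4$. Likewise \eqref{eq4} together with the $k=1$ case handles $\overline{SOME}(16m)$. The paper does not argue this way; it reads each $k$ directly off \eqref{pf21}, \eqref{pf27}, \eqref{pf31} and \eqref{pf33}.

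The content, however, lies in the two base cases $\overline{SOME}(2n)\equiv 0\pmod 8$ and $\overline{SOME}(16n+8)\equiv 0\pmod{32}$. For these you only describe what you would try, and you yourself flag the decisive dissection as unresolved. The target you aim for also does not exist. No finite combination of $\phi$- and $\psi$-products equals $F=\sum\sigma_{o-e}(n)q^n$, because $F$ is a logarithmic derivative, $2F=q\frac{d}{dq}\log\phi(q)$ (quasimodular, not modular).

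Your Lambert series actually give only $F=G(q)-2A(q^2)$, with $A(q)=G(q)+2A(q^2)$ and $G(q)=q\psi(q^2)^4$. That yields truncations such as $F\equiv G(q)-2G(q^2)-4G(q^4)-8G(q^8)\pmod{16}$. You would then have to push these by hand through four levels of $2$-dissection against $1/\phi(-q)$, which is exactly the unverified computation.

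In addition, the auxiliary congruence $\phi(q)^{2^j}\equiv\phi(q^2)^{2^{j-1}}\pmod{2^{j+1}}$ that you plan to use is false. For $j=1$ the $q^2$-coefficients are $4$ and $2$. What holds is $\phi(q)^{2^j}\equiv 1\pmod{2^{j+1}}$.

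The missing idea is the paper's Lemma, \eqref{Ep1}--\eqref{Ep2}:
\[
2F(q)=q\frac{d}{dq}\log\phi(q)=\frac{2\sum_{k\ge1}k^2q^{k^2}}{\phi(q)}.
\]
Combine this with $(-q;q)_\infty/(q;q)_\infty=1/\phi(-q)$ and $\phi(q)\phi(-q)=\phi(-q^2)^2$. The generating function then collapses to \eqref{eq30}, namely $\frac{2}{\phi(-q^2)^2}\sum_{k\ge1}k^2q^{k^2}$, whose prefactor is even in $q$.

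With this, $k=1$ is one line: the even part is $\frac{8}{\phi(-q^2)^2}\sum_k k^2q^{4k^2}$, which is \eqref{pf21}. Two further dissections with $\phi(q)^2=\phi(q^2)^2+4q\psi(q^4)^2$ give \eqref{pf23} and then \eqref{pf27}. Every term on the right of \eqref{pf27} carries a factor $2^5$, so $k=2$ holds for all $n$ at once. No parity split and no appeal to Theorem~\ref{thm5} is needed. Once you have that, your Theorem~\ref{thm5} reductions do finish $k=3,4$ correctly.
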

\begin{theorem}\label{thm7}
   If $n$ is not a perfect square, then for each $k\in\{1,2,3,4\}$, 
    \begin{equation}\label{eq7}
        \overline{SOME}(4^{k-1}n)\equiv 0\pmod{2^{2k+1}}.
    \end{equation}
\end{theorem}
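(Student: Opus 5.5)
The plan is to reduce the generating function of Theorem~\ref{thm1} to theta functions and then $2$-dissect repeatedly. The hypothesis that $n$ is not a perfect square strongly suggests that, modulo $2^{2k+1}$, the $4^{k-1}n$-subsequence of $\overline{SOME}$ is congruent to a function whose $q$-expansion is supported only on squares, such as a multiple of $\varphi(\pm q)$ or $\varphi(\pm q)^{-1}$.

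Throughout, $\varphi(q)=\sum_{n\in\mathbb Z}q^{n^2}$ and $\psi(q)=\sum_{n\ge0}q^{n(n+1)/2}$. First, note that $(-q;q)_\infty/(q;q)_\infty=1/\varphi(-q)$. Hence Theorem~\ref{thm1} (equivalently Theorem~\ref{thm2}) gives
\[
\sum_{n\ge1}\overline{SOME}(n)q^n=\frac{2S(q)}{\varphi(-q)},\qquad S(q)=\sum_{n\ge1}\sigma_{o-e}(n)q^n=\sum_{m\ge1}\frac{q^{2m-1}}{(1+q^{2m-1})^2}.
\]

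The first step is to express $S(q)$ through Jacobi's four-square Lambert series. Expanding $q^{m}/(1+q^{m})^2=\sum_{j\ge1}(-1)^{j-1}jq^{mj}$ for odd $m$ shows that $S$ is a Lambert series in the classical family that generates $r_4(n)$. I would aim for an identity of the shape $16S(q)=A\,\varphi(q)^4+B\,\varphi(-q)^4+C\,\varphi(q^2)^4+\dots$, with small rational constants found by matching the first coefficients. I would then verify it using the standard facts that $\varphi(q)^4$ gives $8\sigma(n)$ minus $32\sigma(n/4)$, and that $\psi(q)^4=\sum\sigma(2n+1)q^n$. With such an identity, the generating function becomes a rational combination of quotients of the form $\varphi(\pm q^{a})^4/\varphi(-q)$, together with $q\psi(q^2)^4/\varphi(-q)$.

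The second step is the $2$-dissection. I would use $\varphi(q)=\varphi(q^4)+2q\psi(q^8)$ and $\varphi(-q)=\varphi(q^4)-2q\psi(q^8)$, together with $1/\varphi(-q)=\varphi(q)/\varphi(-q^2)^2$ and $\varphi(q)\varphi(-q)=\varphi(-q^2)^2$. These let me extract the terms $q^{4n}$, which is the case $k=2$. For $k=1$ no extraction is needed: mod $8$ I would show directly that $2S(q)/\varphi(-q)\equiv c\,(\varphi(q)-1)$ or similar. This uses $\varphi(q)^4\equiv 1 \pmod 8$ up to explicit corrections and $1/\varphi(-q)\equiv\varphi(q)\pmod 4$, since $\varphi(q)\varphi(-q)=\varphi(-q^2)^2\equiv1\pmod 4$. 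After each extraction $q^{4n}\mapsto q^n$, I would expect the resulting series to again be, modulo the appropriate power of $2$, of the same form as before times an extra factor $4$. This matches Theorem~\ref{thm5}, which already says $\overline{SOME}(16n)\equiv 4\,\overline{SOME}(4n)\pmod{2^7}$ and $\overline{SOME}(32n)\equiv4\,\overline{SOME}(8n)\pmod{2^9}$. In fact I would bootstrap from Theorem~\ref{thm5}. Once the cases $k=1,2$ are established directly, applying \eqref{eq4} to $4n$ and to $16n$ raises the modulus by $2^2$ each time, up to the cap $2^7$ resp.\ $2^9$. This gives $k=3$ (modulus $2^7$) and $k=4$ (modulus $2^9$) immediately. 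That explains why $k$ stops at $4$.

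So the real work is $k=1$ (mod $8$) and $k=2$ (mod $32$, for $\overline{SOME}(4n)$). For $k=2$ I would compute the $q^{4n}$-component of $2S(q)\varphi(q)/\varphi(-q^2)^2$ explicitly. Modulo $32$, I would reduce $1/\varphi(-q^2)^2$ using $\varphi(-q^2)^4=\varphi(q^4)^4-\dots$ and the binomial expansion of $(1-x)^{-1}$ with $x\equiv0\pmod 4$, keeping only terms up to $x^2$. The goal is to show that the result is $\equiv c\,\varphi(\pm q)$ modulo $32$, away from the constant term.

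The main obstacle is the first step: pinning down the exact theta-function identity for $S(q)$, including the correct $q^2$ and $q^4$ correction terms. The modulus $2^{2k+1}$ is sharp enough that any slip in these constants breaks the congruence. I would fix the constants numerically from $\sigma_{o-e}(1)=1$, $\sigma_{o-e}(2)=-2$, $\sigma_{o-e}(3)=4,\dots$ before proving the identity via the standard Lambert-series expansions of $\varphi^4$ and $\psi^4$.
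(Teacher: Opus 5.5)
Your plan fails at the step you flag as the main obstacle, and the reason is structural, not a matter of delicate constants: no identity $16S(q)=A\varphi(q)^4+B\varphi(-q)^4+C\varphi(q^2)^4+\cdots$ exists. Put $D(q)=\sum_{n\ge1}\sigma(n)q^n$. From your description of $\sigma_{o-e}$ one checks that $S(q)=D(q)-5D(q^2)+4D(q^4)$. The facts you quote give $\varphi(q)^4=1+8D(q)-32D(q^4)$, $\varphi(-q)^4=1-8D(q)+48D(q^2)-64D(q^4)$ and $q\psi(q^2)^4=D(q)-3D(q^2)+2D(q^4)$. Each of these has the form $c_0+\sum_m c_mD(q^m)$ with $\sum_m c_m/m=0$, which is the condition that the $E_2$-parts cancel. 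The same holds for every finite linear combination of them and of their dilates $q\mapsto q^{a}$. The $D(q^m)$ are linearly independent, so this invariant is well defined, and for $S$ it equals $1-\tfrac52+1=-\tfrac12$. Equivalently, $2S(q)=q\frac{d}{dq}\log\varphi(q)$ is the logarithmic derivative of a weight-$\tfrac12$ form. It is therefore quasimodular with a nonzero $E_2$-component, and no combination of holomorphic weight-$2$ theta products can equal it. Matching coefficients will give an inconsistent system, and your later steps (the quotients $\varphi(\pm q^a)^4/\varphi(-q)$, the direct $k=1,2$ computations) have nothing to act on. Your proposed target is also off: $\varphi(\pm q)^{-1}$ is not supported on squares.

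The missing idea is the exact identity $\varphi(q)S(q)=\sum_{j\ge1}j^2q^{j^2}$. This is $2S(q)=q\frac{d}{dq}\log\varphi(q)$ (the paper's \eqref{Ep1}--\eqref{Ep2}) combined with $q\frac{d}{dq}\varphi(q)=2\sum_{j\ge1}j^2q^{j^2}$. Together with $1/\varphi(-q)=\varphi(q)/\varphi(-q^2)^2$, which you already use, it gives \eqref{eq30}: $\sum_n\overline{SOME}(n)q^n=2\sum_{j\ge1}j^2q^{j^2}/\varphi(-q^2)^2$. The numerator is supported on squares, and the denominator is a series in $q^2$ that is $\equiv1\pmod 4$. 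This yields $k=1$ at once. Extracting even parts twice, using $\varphi(q)^2=\varphi(q^2)^2+4q\psi(q^4)^2$, gives $\sum_n\overline{SOME}(4n)q^n=8\varphi(q)^2\varphi(-q)^{-4}\sum_{j\ge1}j^2q^{j^2}\equiv8\sum_{j\ge1}j^2q^{j^2}\pmod{2^5}$, which is $k=2$. The paper continues the dissection to get $2^5\sum j^2q^{j^2}\pmod{2^7}$ for $16n$ and $2^7\sum j^2q^{j^2}\pmod{2^9}$ for $64n$. Your bootstrap of $k=3,4$ from Theorem~\ref{thm5} is a legitimate shortcut once $k=2$ is in hand. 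For $k=4$, however, you need \eqref{eq5} with $n\mapsto 2n$, i.e.\ $\overline{SOME}(64n)\equiv4\,\overline{SOME}(16n)\pmod{2^9}$, since \eqref{eq4} only gives modulus $2^7$.
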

The following corollary is an immediate consequence of the above theorem.
\begin{corollary}
    For any prime $p$, if $r$ is a quadratic nonresidue modulo $p$, then for each $k\in\{1,2,3,4\}$,
\begin{equation}\label{eq8}
\overline{SOME}(4^{k-1}(pn+r)) \equiv 0 \pmod{2^{2k+1}}.
\end{equation}
\end{corollary}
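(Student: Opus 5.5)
This corollary will follow from Theorem \ref{thm7}. The plan is to show that no integer of the form \(pn+r\) is a perfect square. The congruence \eqref{eq7} for \(N=pn+r\) then gives \eqref{eq8} immediately.

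Fix a prime \(p\), a quadratic nonresidue \(r\) modulo \(p\), and an integer \(n\ge 0\), and set \(N=pn+r\). Suppose \(N=m^2\) for some integer \(m\). Reducing modulo \(p\) gives \(m^2\equiv r\pmod p\). This makes \(r\) a quadratic residue modulo \(p\), contradicting the hypothesis. Note also that \(p\nmid r\) by the definition of a nonresidue, so \(m\not\equiv 0\pmod p\); this is not actually needed for the contradiction. Hence \(N\) is not a perfect square, and Theorem \ref{thm7} with \(k\in\{1,2,3,4\}\) yields \(\overline{SOME}(4^{k-1}N)\equiv 0\pmod{2^{2k+1}}\), which is \eqref{eq8}.

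The only point that needs care is the range of \(n\). If negative values of \(pn+r\) were allowed, they are trivially not squares, but \(\overline{SOME}\) is only defined on nonnegative arguments. So one should take \(n\ge 0\) with \(r\) a positive representative, or simply require \(pn+r\ge 0\). The case \(N=0\) cannot arise, since \(0\) is a square and \(p\nmid r\).

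The remaining subtlety is the convention for \(p=2\). Every residue modulo \(2\) is a square, so no nonresidue exists and the statement is vacuous there. With these remarks the corollary reduces entirely to Theorem \ref{thm7}.
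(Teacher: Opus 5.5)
Your proof is correct and matches the paper, which calls this corollary an immediate consequence of Theorem \ref{thm7}; the only step left implicit there is yours, namely that $pn+r=m^2$ would make $r$ a square modulo $p$. Your remarks on the range of $n$ and the vacuous case $p=2$ are harmless extra care.
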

\begin{theorem} \label{thm8}
    For each $n\geq 0$, we have
    \begin{align}
        \overline{SOME}(4n+2)&\equiv 0\pmod{2^5}, \label{eq9}\\
        \overline{SOME}(8n+2)&\equiv 0\pmod{2^7}, \label{eq10}\\
        \overline{SOME}(16n+10)&\equiv 0\pmod{2^8}, \label{eq11}\\
        \overline{SOME}(16n+14)&\equiv 0\pmod{2^6}, \label{eq12}\\
         \overline{SOME}(16n+12)&\equiv 0\pmod{2^5}, \label{eq13}\\
        \overline{SOME}(32n+28)&\equiv 0\pmod{2^7}, \label{eq14}\\
        \overline{SOME}(32n+20)&\equiv 0\pmod{2^5}, \label{eq15}\\
        \overline{SOME}(32n+24)&\equiv 0\pmod{2^6}, \label{eq16}\\
         \overline{SOME}(128n+80)&\equiv 0\pmod{2^7}, \label{eq17}\\
         \overline{SOME}(64n+48)&\equiv 0\pmod{2^7}, \label{eq18}\\
        \overline{SOME}(128n+112)&\equiv 0\pmod{2^8},\label{eq19}\\
        \overline{SOME}(128n+96)&\equiv 0\pmod{2^8},\label{eq191}\\
        \overline{SOME}(256n+192)&\equiv 0\pmod{2^9},\label{eq192}\\
        \overline{SOME}(512n+320)&\equiv 0\pmod{2^9}.\label{eq193}
    \end{align}
\end{theorem}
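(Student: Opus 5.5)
The plan is to make the generating function of Theorem \ref{thm1} fully explicit in terms of Ramanujan's theta functions $\varphi(q)=\sum_{n\in\mathbb{Z}}q^{n^2}$ and $\psi(q)=\sum_{n\ge0}q^{n(n+1)/2}$, and then to extract each progression by repeated $2$-dissection. I have not checked whether the resulting coefficients actually reach the stated $2$-adic valuations; that is the real test of the plan.

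\textbf{Step 1 (the Lambert series).} By Theorem \ref{thm2} the series in \eqref{eq1} is $\sum_n \sigma_{o-e}(n)q^n$. Expanding $q^a/(1+q^a)^2=\sum_{k\ge1}(-1)^{k+1}kq^{ak}$ over odd $a$ gives the following.
\begin{itemize}
\item For odd $n$, $\sigma_{o-e}(n)=\sigma(n)$.
\item For $n=2^s m$ with $m$ odd and $s\ge1$, $\sigma_{o-e}(n)=-2^s\sigma(m)$.
\end{itemize}
With $A(q):=\sum_{m\ \mathrm{odd}}\sigma(m)q^m=q\psi(q^2)^4$, this yields
\[
\sum_{n\ge1}\sigma_{o-e}(n)q^n = A(q)-\sum_{s\ge1}2^sA(q^{2^s}).
\]
Together with $(-q;q)_\infty/(q;q)_\infty=1/\varphi(-q)$, we get
\[
\sum_{n\ge1}\overline{SOME}(n)q^n=\frac{2}{\varphi(-q)}\Bigl(A(q)-2A(q^2)-4A(q^4)-8A(q^8)-\cdots\Bigr).
\]
Modulo $2^N$ only finitely many terms survive. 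Each term $2^{s+1}A(q^{2^s})$ is supported on exponents $\equiv 2^s \pmod{2^{s+1}}$, which is what lets the progressions in Theorem \ref{thm8} be separated.

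\textbf{Step 2 (dissecting the overpartition factor).} I will use the standard identities:
\begin{itemize}
\item $1/\varphi(-q)=\varphi(q)/\varphi(-q^2)^2$;
\item $\varphi(q)=\varphi(q^4)+2q\psi(q^8)$;
\item $\psi(q)^2=\varphi(q^4)\psi(q^2)\cdot(\ldots)$ relations, and $\varphi(q)^2=\varphi(q^2)^2+4q\psi(q^4)^2$;
\item $\psi(q)\varphi(q)=\varphi(q^2)\psi(q^2)+\ldots$-type identities, as in Hirschhorn--Sellers' proof of $\overline{p}(8n+7)\equiv0\pmod{64}$.
\end{itemize}
Using these I write $1/\varphi(-q)$ in the form $\frac{1}{\varphi(-q^2)^2}\bigl(\varphi(q^4)+2q\psi(q^8)\bigr)$, then iterate after replacing $q^2\to q$.

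For each residue class, I collect the terms of $\frac{2}{\varphi(-q)}\cdot(\text{Step 1 series})$ with exponents in that class. Each contribution then carries an explicit power of $2$: the leading $2$, times $2^s$ from the Lambert part, times $2^j$ from the factors $2q\psi(q^8)$ and $4q\psi(q^4)^2$ chosen in the dissection. The remaining cofactor is a theta quotient with integer coefficients.

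\textbf{Step 3 (reaching the moduli).} To reach the exact powers, I will reduce cofactors with congruences such as:
\begin{itemize}
\item $\varphi(q)\equiv1\pmod 2$ and $\varphi(q)^{2^k}\equiv\varphi(q^2)^{2^{k-1}}\pmod{2^{k+1}}$;
\item $\psi(q)^4\equiv\psi(q^2)^2\pmod 4$.
\end{itemize}
These show that certain sums of two or three contributions cancel to higher order, for instance the $8n+2$ class versus the $4n+2$ class.

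The deeper progressions ($16n+10$, $32n+28$, $64n+48$, $128n+80$, $128n+96$, $128n+112$, $256n+192$, $512n+320$) come from iterating the same procedure on the even part. At the level $q\to q^{4}$ the expansion has the same shape as the original with an extra factor, which is essentially the mechanism behind Theorem \ref{thm5}. The results of Theorems \ref{thm5}--\ref{thm7} (for example $\overline{SOME}(2^{2k-1}n)\equiv0 \pmod{2^{2k+1}}$ and the non-square statement) can be fed in to shorten the deeper cases.

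\textbf{Main obstacle.} I expect the main obstacle to be the bookkeeping in the deepest cases, \eqref{eq19}--\eqref{eq193}. There, after three or four dissections, several contributions of the same $2$-adic order must cancel, so each cofactor has to be known modulo $2$ or $4$, not merely shown to be integral. My first attempt will be to express every cofactor, modulo the needed power of $2$, as a single product $\varphi(q^{a})^{i}\psi(q^{b})^{j}$. Then two contributions can be compared directly, and the cancellation can be checked coefficientwise via the representations of $\varphi$ and $\psi$ as sums over squares and triangular numbers.
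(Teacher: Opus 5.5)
Your Step 1 is correct, including $A(q)=q\psi(q^2)^4$. But the proposal stops where the proof has to begin: none of the fourteen congruences is derived, and the mechanism you describe cannot reach the required powers of $2$ by bookkeeping alone.

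\textbf{Where the plan fails.} The separation of residue classes in Step 1 does not survive the factor $2/\varphi(-q)=2(\varphi(q^4)+2q\psi(q^8))/\varphi(-q^2)^2$, which mixes the classes. So the visible powers of $2$ fall far short of the targets, and the true valuations come from exact cancellations between contributions. Already for \eqref{eq9}, your representation gives
\[
\sum_{n\ge0}\overline{SOME}(2n)q^n=\frac{4}{\varphi(-q)^2}\Bigl(q\psi(q^4)\psi(q)^4-\varphi(q^2)B(q)\Bigr),\qquad B(q)=\sum_{s\ge0}2^sA(q^{2^s}),
\]
where the bracket shows no factor of $2$ at all. To get $2^5$ on $4n+2$ you must show two things:
\begin{itemize}
\item the odd part of the bracket vanishes identically; this uses $\psi(q)^2=\varphi(q)\psi(q^2)$, whereas your ``$\psi(q)^2=\varphi(q^4)\psi(q^2)\cdot(\ldots)$'' is not an identity;
\item the even part of the bracket is even.
\end{itemize}
For the moduli $2^7$, $2^8$, $2^9$ in the deeper classes you would need the bracket modulo ever higher powers of $2$, i.e.\ essentially exactly. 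It equals $2\sum_{k\ge1}k^2q^{2k^2}$, and your plan has no route to this. Also, the congruence $\varphi(q)^{2^k}\equiv\varphi(q^2)^{2^{k-1}}\pmod{2^{k+1}}$ in Step 3 is false: for $k=1$ the coefficients of $q^2$ are $4$ and $2$.

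\textbf{The missing idea.} The paper's Lemma identifies the Lambert series in \eqref{eq1} as $q\frac{d}{dq}\log\varphi(q)=2\sum_{k\ge1}k^2q^{k^2}/\varphi(q)$. Combined with $\varphi(q)\varphi(-q)=\varphi(-q^2)^2$, this gives \eqref{eq30}:
\[
\sum_{n\ge1}\overline{SOME}(n)q^n=\frac{2}{\varphi(-q^2)^2}\sum_{k\ge1}k^2q^{k^2}.
\]
Here the denominator is a function of $q^2$, and the numerator splits by the parity of $k$, with even $k$ carrying an explicit factor $4$. The only other tools needed are $\varphi(q)^2=\varphi(q^2)^2+4q\psi(q^4)^2$, $\varphi(q)^{2^k}/\varphi(-q)^{2^k}\equiv1\pmod{2^{k+2}}$, and $\psi(q)^2\equiv\psi(q^2)\pmod 2$. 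Each congruence then follows from a short chain. For example,
\[
\sum_{n\ge0}\overline{SOME}(4n+2)q^n=32\psi(q^2)^2\varphi(-q)^{-4}\sum_{k\ge1}k^2q^{k^2}
\]
gives \eqref{eq9}. Reducing $\varphi(-q)^{-4}\equiv1\pmod 8$ and keeping only even $k$ then gives \eqref{eq10} and \eqref{eq11}. The progressions contained in $4n$ are handled the same way, starting from $\sum_{n\ge0}\overline{SOME}(4n)q^n=8\varphi(q)^6\varphi(-q^2)^{-8}\sum_{k\ge1}k^2q^{k^2}$.
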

\begin{theorem} \label{thm10}
    For each $n\ge 0$, we have
    \begin{align}
        \overline{SOME}(3n+2)&\equiv 0\pmod{3},\label{eq22}\\
        \overline{SOME}(29n+19)&\equiv 0\pmod{3}.\label{eq23}
    \end{align}
\end{theorem}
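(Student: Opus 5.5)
The plan is to reduce the generating function in Theorem~\ref{thm1} modulo $3$ to a lacunary theta-type series, and then read off which exponents can occur.

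\textbf{Reducing the overpartition factor.} Write
\[
\frac{(-q;q)_\infty}{(q;q)_\infty}=\frac{1}{\varphi(-q)},\qquad \varphi(-q)=\frac{(q;q)_\infty^2}{(q^2;q^2)_\infty}.
\]
Since $\varphi(-q)^3\equiv \varphi(-q^3)\pmod 3$, we get
\[
\frac{(-q;q)_\infty}{(q;q)_\infty}\equiv \frac{\varphi(-q)^2}{\varphi(-q^3)}\pmod 3 .
\]
The denominator is a series in $q^3$. So modulo $3$, the terms $q^{3n+2}$ of $\sum \overline{SOME}(n)q^n$ come only from the terms $q^{3n+2}$ of
\[
2\varphi(-q)^2\, S(q),\qquad S(q):=\sum_{m\ge1}\frac{q^{2m-1}}{(1+q^{2m-1})^2}.
\]

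\textbf{Writing $S(q)$ in closed form.} Expand $S(q)=\sum_{N\ge1}\sigma_{o-e}(N)q^N$, as in Theorem~\ref{thm2}. Splitting $k$ by parity in
\[
\frac{x}{(1+x)^2}=\sum_{k\ge1}(-1)^{k-1}kx^k
\]
gives
\[
S(q)=A(q)-4A(q^2),\qquad A(q)=\sum_{m\text{ odd}}\frac{q^m}{(1-q^m)^2}.
\]
I would then express $A(q)$ through classical Lambert series for theta powers. One ingredient is
\[
q\psi(q^2)^4=\sum_{n\ge0}\frac{(2n+1)q^{2n+1}}{1-q^{4n+2}} .
\]
The others are the Jacobi-type identities for $\varphi(q)^4$ and $\varphi(-q)^4$, whose coefficients are $8\sigma(N)$ for $N$ odd and $24\,\sigma(\text{odd part of }N)$ for $N$ even. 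The target is an identity of the form $S(q)=\alpha\,\varphi(-q)^4+\beta\,\psi$-term$+\gamma$, to be checked on coefficients. A closely related route is to work directly with the multiplicative description of $\sigma_{o-e}$ at odd and even $N$, and reduce it modulo $3$.

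Once $S$ is written as a theta quotient, multiplying by $\varphi(-q)^2$ should give, modulo $3$, a product such as $\varphi(-q)^6$, or $\varphi(-q)^2$ times a series in $q^3$. Again by the Frobenius congruence, this becomes, modulo $3$, a product of a series in $q^3$ with a single low-weight theta function, such as $\varphi(-q)^2$, $\varphi(-q)^{-2}$ or $\psi(q)^2$, roughly of the shape $\varphi(-q^3)^{a}\,\varphi(-q)^{\pm 2}$.

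\textbf{Deducing \eqref{eq22}.} Now use the quadratic-form description of the remaining factor. For instance, $\varphi(-q)^2$ is supported on sums of two squares, and $a^2+b^2$ is never $\equiv 2\pmod 3$ unless $3\mid a$ and $3\mid b$. A term $q^{3n+2}$ would also require the residue class $2\pmod 3$ together with a factor in $q^3$, which is impossible. This gives \eqref{eq22}.

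\textbf{Deducing \eqref{eq23}.} For this congruence I would identify the final theta product modulo $3$ as a Hecke-type series $\sum r(N)q^N$. Its coefficients should be supported on numbers represented by a specific binary quadratic form of discriminant involving $29$, or should be multiplicative with $r(p)\equiv 0$ when $\left(\frac{\cdot}{29}\right)=-1$. Then $29n+19$ is excluded, because $19$ is a quadratic nonresidue modulo $29$ (the squares modulo $29$ are $1,4,5,6,7,9,13,16,20,22,23,24,25,28$). The shift coming from the exponent of $q$ in the theta product has to be accounted for when matching residue classes.

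\textbf{Main obstacle.} The hard step is the closed form for $S(q)$, and then the exact modulo-$3$ theta product. Everything afterwards is a routine residue check. If no clean identity exists, I would fall back on the coefficient formula of Theorem~\ref{thm2}. I would compute $\sigma_{o-e}$ modulo $3$ multiplicatively and then use the Hirschhorn-style $3$-dissection of $\varphi(-q)$, namely
\[
\varphi(-q)=\varphi(-q^9)-2q\,W(q^3).
\]
With it, I would extract the $q^{3n+2}$ component directly.
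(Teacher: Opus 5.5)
Your plan for \eqref{eq23} cannot succeed, because that congruence, as printed, is false. By \eqref{Ep2}, the generating function of Theorem~\ref{thm1} equals $2\phi(-q^2)^{-2}\sum_{k\ge1}k^2q^{k^2}$. Since $\phi(-q^2)^3\equiv\phi(-q^6)\pmod 3$, this gives
\[
\sum_{n\ge1}\overline{SOME}(n)q^n\equiv 2\,\phi(-q^2)\sum_{m\ge0}\overline{p}(m)q^{6m}\sum_{k\ge1,\ 3\nmid k}q^{k^2}\pmod 3.
\]
Take $n=1$, i.e.\ $29n+19=48$. Only $k=2,4$ contribute. The coefficients of $q^{44}$ and $q^{32}$ in $\phi(-q^2)\sum_{m}\overline{p}(m)q^{6m}$ are $-2\cdot 64+2\cdot 40+2\cdot 4=-40$ and $-2\cdot 24+2\cdot 14+2=-18$. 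Hence $\overline{SOME}(48)\equiv 2(-40-18)\equiv 1\pmod 3$. The paper's own proof ends with $\overline{SOME}(24n+19)\equiv 0\pmod 3$, so the ``29'' is a misprint for ``24''. There is no discriminant-$29$ structure to find, and the fact that $19$ is a nonresidue modulo $29$ is irrelevant. The paper's argument is an iterated dissection of the display above. Keep the exponents $\equiv 1\pmod 3$, then the even powers (odd $k$). Then use $\phi(-q^3)/\phi(-q)\equiv\phi(-q)^2\pmod 3$ and $\phi(-q)^2=\phi(q^2)^2-4q\psi(q^4)^2$ to isolate the progression $12n+7$. What remains is $\psi(q^2)^2\sum_{k\not\equiv 1\ (\mathrm{mod}\ 3)}(2k+1)^2q^{(k^2+k)/3}$, which is a series in $q^2$.

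For \eqref{eq22}, the step you flag as the main obstacle is a genuine gap, and the target you set cannot be reached. For $N=2^aM$ with $a\ge1$ and $M$ odd, $\sigma_{o-e}(N)=-2^a\sigma(M)$, which depends on $a$. At such $N$, however, $\phi(\pm q)^4$ has coefficient $24\sigma(M)$ and $q\psi(q^2)^4$ has coefficient $0$. So no identity $S=\alpha\phi(-q)^4+\beta q\psi(q^2)^4+\gamma$ exists; indeed $S$ is only quasimodular. The missing idea is the logarithmic-derivative identity \eqref{Ep1}--\eqref{Ep2}, $S(q)=\phi(q)^{-1}\sum_{k\ge1}k^2q^{k^2}$, which leads directly to the display above. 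There $\phi(-q^2)$ is supported on exponents $2n^2\equiv 0,2\pmod 3$, and the last sum on exponents $\equiv 1\pmod 3$, so no exponent $\equiv 2\pmod 3$ can occur. Your residue step is also wrong as written: $1^2+1^2=2$, and $\phi(-q)^2=1-4q+4q^2+\cdots$ has a $q^2$-coefficient that is nonzero modulo $3$. What kills $q^{3n+2}$ is the support of $\sum k^2q^{k^2}$ modulo $3$, not a sums-of-two-squares property. Your fallback via a $3$-dissection of $\sum\sigma_{o-e}(N)q^N$ is not carried out. Since, for instance, $\sigma_{o-e}(2)=-2\not\equiv 0\pmod 3$, it would require a cancellation among dissection components that you do not supply.
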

\begin{theorem} \label{thm9}
    For each $n\ge 0$, we have
    \begin{align}
       \overline{SOME}(40n+31)&\equiv 0\pmod{5},\label{eq20}\\
        \overline{SOME}(40n+39)&\equiv 0\pmod{5}.\label{eq21}
    \end{align}
\end{theorem}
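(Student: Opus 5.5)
The plan is to reuse the $2$-dissection work from the proof of Theorem~\ref{thm4}, reduce modulo $5$, and then finish with a $5$-dissection. Both progressions $40n+31$ and $40n+39$ lie inside $8n+7$, because $31\equiv 39\equiv 7 \pmod 8$. So I would start from the generating function of $\overline{SOME}(8n+7)$ that comes out of the proof of Theorem~\ref{thm4}. I expect this to have the shape
\[
\sum_{n\ge0}\overline{SOME}(8n+7)q^n = 64\,F(q),
\]
with $F(q)$ an explicit quotient of eta-type products $f_1^{a}f_2^{b}f_4^{c}\cdots$, where $f_k=(q^k;q^k)_\infty$. That generating function rests on the following ingredients:
\begin{itemize}
\item Theorem~\ref{thm1} together with $(-q;q)_\infty/(q;q)_\infty = 1/\varphi(-q)$;
\item the identification of $\sum_{m\ge1} q^{2m-1}/(1+q^{2m-1})^2$ as $\sum_N c_Nq^N$, where $c_N=(-1)^{N+1}2^a\sigma(m)$ for $N=2^am$ with $m$ odd, i.e. (by Theorem~\ref{thm2}) $c_N=\sigma_{o-e}(N)$; its odd part is $q\psi(q^2)^4$;
\item the standard $2$-dissections of $1/\varphi(-q)$ and of $\psi$.
\end{itemize}
Since $64\equiv 4\pmod 5$, the claim reduces to showing that the coefficients of $q^{5n+3}$ and $q^{5n+4}$ in $F(q)$ vanish modulo $5$. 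Indeed, $8(5n+3)+7=40n+31$ and $8(5n+4)+7=40n+39$.

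Next I reduce $F$ modulo $5$ using $f_k^5\equiv f_{5k}\pmod 5$. The aim is to write $F(q)\equiv G(q^5)\,H(q)\pmod 5$, where $G(q^5)$ collects all the fifth-power factors. Then only $H$ needs to be $5$-dissected. I would try to push as much as possible into $G$, so that $H$ is a short product such as $\psi(q)^k$, $f_1^{j}$ or $f_1^jf_2^{l}$ with small exponents.

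For the $5$-dissection itself I would use the classical tools. The first is Ramanujan's dissection
\[
f_1 = f_{25}\bigl(R(q^5)^{-1}-q-q^2R(q^5)\bigr),
\]
where $R$ is the Rogers--Ramanujan continued fraction quotient. The second is the analogous dissection of $\psi(q)$, namely $\psi(q)=A(q^5)+q\psi(q^{25})$ with $A$ a quotient of theta products, or equivalently $\psi(q)=f(q^{10},q^{15})+qf(q^5,q^{20})+q^3\psi(q^{25})$. Where an $f_1^3$ factor appears, Jacobi's identity $f_1^3=\sum(-1)^k(2k+1)q^{k(k+1)/2}$ also helps. Since $k(k+1)/2\bmod 5\in\{0,1,3\}$, and the terms with $k(k+1)/2\equiv 3$ have $2k+1\equiv 0\pmod 5$, we get $f_1^3\equiv$ (terms with exponent $\equiv 0,1\pmod 5$). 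Similarly, $\psi(q)$ only has exponents $\equiv 0,1,3\pmod5$. Multiplying these restricted supports should show that the exponents $\equiv 3,4\pmod 5$ either do not occur in $H$ or come with coefficients divisible by $5$.

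I expect the main obstacle to be the first step. The exact closed form of $\sum \overline{SOME}(8n+7)q^n$ has to be pinned down as a product expression amenable to $5$-dissection. The even part of $\sum \sigma_{o-e}(N)q^N$ is an infinite sum $-\sum_{a\ge1}2^aq^{2^a}\psi(q^{2^{a+1}})^4$, so the $2$-dissection must be organised to isolate exactly one $\psi^4$-type term in the $8n+7$ component. Once $F$ is a clean product, the mod $5$ reduction and the support argument above should be routine. If the supports do not immediately exclude both residues, I would fall back on writing $H$ via the $R(q^5)$ dissection and checking that the coefficients of the $q^3$ and $q^4$ components cancel modulo $5$.
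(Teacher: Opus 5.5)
There is a genuine gap, at exactly the step you flag as the main obstacle, and your prediction of its outcome is wrong. The rest of your outline matches the paper's proof: restrict to $8n+7$, use $64\equiv 4\pmod 5$, reduce with $f_k^5\equiv f_{5k}$, then apply Jacobi's identity for $f_1^3$ and residues of triangular numbers.

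The generating function for $\overline{SOME}(8n+7)$ is not an eta quotient.

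\begin{itemize}
\item Going through $\sigma_{o-e}$ is the hard way. By Theorem~1 the full generating function is $\phi(-q)^{-1}\,q\frac{d}{dq}\log\phi(q)$.
\item Writing $q\frac{d}{dq}\log\phi(q)=2\sum_{k\ge1}k^2q^{k^2}/\phi(q)$ and using $\phi(q)\phi(-q)=\phi(-q^2)^2$ gives $2\phi(-q^2)^{-2}\sum_{k\ge1}k^2q^{k^2}$. This $2$-dissects trivially according to the parity of $k$.
\item Two applications of $\phi(q)^2=\phi(q^2)^2+4q\psi(q^4)^2$, via $1/\phi(-q)^{2j}=\phi(q)^{2j}/\phi(-q^2)^{4j}$, followed by $\phi(q)\psi(q^2)=\psi(q)^2$, give
\[
\sum_{n\ge0}\overline{SOME}(8n+7)q^n=64\,\frac{\psi(q)^6}{\phi(-q)^8}\sum_{m\ge0}(2m+1)^2q^{m(m+1)/2}.
\]
\end{itemize}

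The last factor equals $\psi(q)+8q\psi'(q)$. It is $\psi(q)$ times a quasimodular, non-modular weight-$2$ form. So the $8n+7$ component is not a product of $f_k$'s, and no regrouping of $-\sum_{a\ge1}2^aq^{2^a}\psi(q^{2^{a+1}})^4$ will produce the single-$\psi^4$ product shape you are aiming for. Your fallback via the $R(q^5)$ dissection of $f_1$ also cannot handle this factor. As written, the plan has no concrete $F$ to dissect.

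The missing idea is to treat this weighted series exactly as you treat $f_1^3$.

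\begin{itemize}
\item We have $m(m+1)/2\equiv 3\pmod 5$ precisely when $m\equiv 2\pmod 5$, that is, when $5\mid 2m+1$. So modulo $5$ the series is supported on exponents $\equiv 0,1\pmod 5$.
\item Also $\psi(q)^6/\phi(-q)^8=f_2^{20}/f_1^{22}\equiv f_1^3f_{10}^4/f_5^5\pmod 5$.
\item Hence, modulo $5$, the right-hand side is a series in $q^5$ times two series supported on residues $\{0,1\}$. The product is supported on $\{0,1,2\}$, which kills the coefficients of $q^{5n+3}$ and $q^{5n+4}$. This is precisely the paper's argument.
\end{itemize}

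The weights $(2m+1)^2$ are essential. Your unweighted observation that $\psi(q)$ has exponents in $\{0,1,3\}$ would not suffice, since $\{0,1\}+\{0,1,3\}$ already covers every residue modulo $5$.
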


Section \ref{sec2} presents the necessary background material required for the proofs of our results. The proofs of Theorems \ref{thm1}–-\ref{thm3} are given in Section \ref{sec3}. In Section \ref{sec4}, we establish Theorem \ref{thm4} along with its corollary. The proofs of Theorems \ref{thm5}–-\ref{thm8} are provided in Section \ref{sec5}. Finally, Section \ref{sec6} contains the proofs of Theorems \ref{thm9} and \ref{thm10}.

\section{Preliminaries}\label{sec2}
In this section, we collect several preliminary results that will be used throughout the paper. 
We begin with Ramanujan's general theta function $f(a,b)$, defined by
\[
f(a,b) = \sum_{n=-\infty}^{\infty} a^{n(n+1)/2} b^{n(n-1)/2}, \ \ |ab|<1.
\]

Jacobi’s triple product identity takes the form
\[
f(a,b) = (-a;ab)_\infty\, (-b;ab)_\infty\, (ab;ab)_\infty.
\]

The special cases of $f(a,b)$ are

\begin{align}
\psi(q) &:= f(q,q^3) 
= \sum_{n=0}^{\infty} q^{n(n+1)/2}
= \frac{(q^2;q^2)_\infty}{(q;q^2)_\infty}, \label{eq25}\\
\phi(q) &:= f(q,q) 
= 1+2\sum_{n=1}^{\infty} q^{n^2}=\dfrac{(-q;q^2)_\infty (q^2;q^2)_\infty}{(q;q^2)_\infty (-q^2;q^2)_\infty},\label{eq26}
\end{align}
and
\begin{equation}
f(-q) := f(-q,-q^2)
= \sum_{n=-\infty}^{\infty} (-1)^n q^{n(3n-1)/2}
= (q;q)_\infty.
\end{equation}

Consider the identities due to Ramanujan \cite[Entry 25, p. 40]{BCB}:
\begin{align}
\phi(-q^2)^2 &= \phi(q)\phi(-q),  \label{eq27}\\
\phi(q) &= \phi(q^4) + 2q\psi(q^8), \label{eq28}\\
\phi(q)^2 &= \phi(q^2)^2 + 4q\psi(q^4)^2. \label{eq29}
\end{align}

\begin{lemma} For $|q|<1$, the following holds:
    \begin{align}
        q\frac{d}{dq}\log{\phi(q)} &= 2\sum_{n=1}^{\infty}\frac{q^{2n-1}}{(1+q^{2n-1})^2}\label{Ep1} \\ 
        & = \frac{2\sum_{k=1}^\infty k^2 q^{k^2}}{1+2\sum_{k=1}^\infty  q^{k^2}}.\label{Ep2}
    \end{align}
\end{lemma}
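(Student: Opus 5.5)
We prove \eqref{Ep1} by logarithmic differentiation of the product form of $\phi(q)$ in \eqref{eq26}, and \eqref{Ep2} by applying the operator $q\frac{d}{dq}$ directly to the series form.

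For \eqref{Ep1}, we first simplify the product in \eqref{eq26}. Since $(q^2;q^2)_\infty=(q;q)_\infty(-q;q)_\infty$ and $(-q;q)_\infty=(-q;q^2)_\infty(-q^2;q^2)_\infty$, we have
\[
\frac{(q^2;q^2)_\infty}{(q;q^2)_\infty(-q^2;q^2)_\infty}=\frac{(q;q)_\infty(-q;q^2)_\infty(-q^2;q^2)_\infty}{(q;q^2)_\infty(-q^2;q^2)_\infty}=(q^2;q^2)_\infty(-q;q^2)_\infty.
\]
Hence $\phi(q)=(-q;q^2)_\infty^2(q^2;q^2)_\infty$, which is also the Jacobi triple product $f(q,q)$ with $a=b=q$. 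Taking logarithms,
\[
\log\phi(q)=2\sum_{n\ge1}\log(1+q^{2n-1})+\sum_{n\ge1}\log(1-q^{2n}).
\]
Applying $q\frac{d}{dq}$ termwise gives
\[
q\frac{d}{dq}\log\phi(q)=2\sum_{n\ge1}\frac{(2n-1)q^{2n-1}}{1+q^{2n-1}}-\sum_{n\ge1}\frac{2nq^{2n}}{1-q^{2n}}.
\]

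It remains to match this with $2\sum_{n\ge1} q^{2n-1}/(1+q^{2n-1})^2$, and we do so by comparing coefficients of $q^N$. On the right-hand side,
\[
\frac{x}{(1+x)^2}=\sum_{k\ge1}(-1)^{k-1}kx^k,
\]
so the coefficient of $q^N$ is $2\sum_{m\,\mathrm{odd},\ mk=N}(-1)^{k-1}k$.

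On the left-hand side, expanding $\frac{(2n-1)x}{1+x}=\sum_k(-1)^{k-1}(2n-1)x^k$, the first sum contributes $2\sum_{d\,\mathrm{odd},\ d\mid N}(-1)^{N/d-1}d$. Expanding $\frac{2n\,x}{1-x}$, the second sum contributes $-\sum_{d\,\mathrm{even},\ d\mid N}d$. The identity then reduces to showing that these two divisor sums agree for every $N$, which we split by the parity of $N$.

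For $N$ odd, all divisors are odd and every cofactor is odd, so both sides equal $2\sigma(N)$.

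For $N$ even, write $N=2^aM$ with $M$ odd and $a\ge1$. On the right, the divisor $m$ runs over the divisors of $M$ and $k=N/m$ is even, so the right side is $-2\cdot2^a\sigma(M)$. On the left, the odd divisors $d\mid M$ have even cofactor, contributing $-2\sigma(M)$. The even divisors contribute $-(2+4+\dots+2^a)\sigma(M)=-(2^{a+1}-2)\sigma(M)$. The total is $-2^{a+1}\sigma(M)$, which matches.

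For \eqref{Ep2}, write $\phi(q)=1+2\sum_{k\ge1}q^{k^2}$. Then $q\phi'(q)=2\sum_{k\ge1}k^2q^{k^2}$, and dividing by $\phi(q)$ gives $q\frac{d}{dq}\log\phi(q)$ directly.

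The only delicate step is the divisor bookkeeping for even $N$; the differentiation and the termwise expansions, which are valid for $|q|<1$ by absolute convergence, are routine.
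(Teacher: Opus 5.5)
Your proof is correct. For \eqref{Ep2} it coincides with the paper's argument, but for \eqref{Ep1} you take a different route.

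\textbf{The paper's route.} The paper keeps the quotient form \eqref{eq26} and pairs the factors via $\log\frac{1+x}{1-x}=2\sum_{j\ge1}x^{2j-1}/(2j-1)$, with $x=q^{2k-1}$ and with $x=q^{2k}$. It then interchanges the order of summation \emph{before} differentiating. Writing $y=q^{2n-1}$, one has $\sum_{k\ge1}\bigl(y^{2k-1}-y^{2k}\bigr)=y/(1+y)$, which gives
$\log\phi(q)=2\sum_{n\ge1}\frac{q^{2n-1}}{(2n-1)(1+q^{2n-1})}$.
Applying $q\frac{d}{dq}$ termwise then produces the right-hand side of \eqref{Ep1} at once, with no arithmetic case analysis.

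\textbf{Your route.} You first reduce to the triple-product form $(-q;q^2)_\infty^2(q^2;q^2)_\infty$, differentiate, and then verify the resulting Lambert-series identity coefficient by coefficient. The divisor bookkeeping is correct in both parity cases. For $N=2^aM$, the odd divisors contribute $-2\sigma(M)$ and the even divisors contribute $-(2^{a+1}-2)\sigma(M)$, which together match $-2^{a+1}\sigma(M)$.

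\textbf{Comparison.} Your argument is more computational than the paper's rearrangement trick, but it yields something the paper leaves implicit. It shows that the coefficient of $q^N$ in $q\frac{d}{dq}\log\phi(q)$ is $2\sigma(N)$ for odd $N$ and $-2^{a+1}\sigma(M)$ for $N=2^aM$. That is exactly $2\sigma_{o-e}(N)$, the arithmetic function that appears in Theorem~\ref{thm2}. So your computation essentially contains the content of that theorem's proof.

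\textbf{A minor point.} In \eqref{Ep2}, dividing by $\phi(q)$ is legitimate because the product form shows $\phi(q)\neq0$ for $|q|<1$. Neither you nor the paper says this explicitly, but it costs one line to add.
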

\begin{proof}   
    From \eqref{eq26}, we see that
    \begin{align*}
        q\frac{d}{dq}\log\phi(q)&=q\frac{d}{dq}\left(\sum_{k=1}^\infty\left(\log(1+q^{2k-1})+\log(1-q^{2k})-\log(1-q^{2k-1})-\log(1+q^{2k})\right)\right)\\
&=q\frac{d}{dq}\left(\sum_{k=1}^\infty\sum_{n=1}^{\infty}\left(\frac{(q^{2k-1})^n}{n}\left((-1)^{n-1}+1\right)-\frac{(q^{2k})^n}{n}\left((-1)^{n-1}+1\right)\right)\right)\\&
=q\frac{d}{dq}\left(\sum_{k=1}^\infty\sum_{n=1}^{\infty}\left(\frac{2(q^{2k-1})^{2n-1}}{2n-1}-\frac{2(q^{2k})^{2n-1}}{2n-1}\right)\right)\\
&
=q\frac{d}{dq}\left(\sum_{n=1}^{\infty}\frac{2}{2n-1}\left(\sum_{k=1}^{\infty}(-1)^{k-1}(q^{2n-1})^k\right)\right)\\&
=2q\frac{d}{dq}\left(\sum_{n=1}^{\infty}\frac{q^{2n-1}}{(2n-1)(1+q^{2n-1})}\right)\\
& = 2\sum_{n=1}^{\infty}\frac{q^{2n-1}}{(1+q^{2n-1})^2}.
    \end{align*}
The second equality is derived in the same way, by applying logarithmic differentiation to the series expansion of $\phi(q)$.
\end{proof}

\section{Proofs of theorems \ref{thm1} to \ref{thm3}} \label{sec3}

\subsection{Proof of Theorem \ref{thm1}}
The steps are outlined as follows.
\[
\sum_{n\ge1} \overline{SOME}(n) q^n
=
\left.
\frac{\partial}{\partial z}
\prod_{n=1}^{\infty}
\frac{(1+(zq)^{2n-1})(1+(q/z)^{2n})}
{(1-(zq)^{2n-1})(1-(q/z)^{2n})}
\right|_{z=1}.
\]

Differentiating logarithmically and evaluating at $z=1$ gives
\begin{align*}
\nonumber\sum_{n\ge1} \overline{SOME}(n) q^n&=\frac{(-q;q)_\infty}{(q;q)_\infty}
\left[
\sum_{n=1}^{\infty}\frac{(2n-1)q^{2n-1}}{1+q^{2n-1}}-\sum_{n=1}^{\infty}\frac{(2n)q^{2n}}{1+q^{2n}}\right.\\&\left.
\nonumber+\sum_{n=1}^{\infty}
\frac{(2n-1)q^{2n-1}}{1-q^{2n-1}}-\sum_{n=1}^{\infty}
\frac{(2n)q^{2n}}{1-q^{2n}}
\right]\\&\nonumber
=\frac{(-q;q)_\infty}{(q;q)_\infty}
\sum_{n=1}^{\infty}
(-1)^{n-1} n q^n
\left(
\frac{1}{1+q^n}
+
\frac{1}{1-q^n}
\right)\\&
=2\,
\frac{(-q;q)_\infty}{(q;q)_\infty}
\sum_{n=1}^{\infty}
\frac{(-1)^{n-1} n q^n}{1-q^{2n}}.
\end{align*}

Expanding the denominator as a geometric series,
\[
\frac{1}{1-q^{2n}}
=
\sum_{m=0}^{\infty} q^{2mn},
\]
we obtain
\begin{align*}
\sum_{n\ge1} \overline{SOME}(n) q^n &= 2\, \frac{(-q;q)_\infty}{(q;q)_\infty}
\sum_{n, m\ge1}
(-1)^{n-1} n q^{(2m-1)n}\\
& = 2\,
\frac{(-q;q)_\infty}{(q;q)_\infty}
\sum_{m=1}^{\infty}
\frac{q^{2m-1}}{(1+q^{2m-1})^2}.
\end{align*}
This completes the proof.

In view of \eqref{eq1} and \eqref{Ep2}, we see that
\begin{align}\label{eq30}
\nonumber\sum_{n=1}^\infty \overline{SOME}(n) q^n
&
=\frac{1}{\phi(-q)}\times \frac{2\sum_{k=1}^{\infty} k^2 q^{k^2}}{1+2\sum_{k=1}^{\infty} q^{k^2}}\\&
=2\frac{1}{\phi(-q^2)^2}\sum_{k=1}^{\infty} k^2 q^{k^2}.  
\end{align}
This proves Corollary \ref{thm12}.

\subsection{Proof of Theorem \ref{thm2}}
By Theorem \ref{thm1}, we have
\begin{align*}
\sum_{n\ge1} \overline{SOME}(n) q^n
&=2\,\frac{(-q;q)_\infty}{(q;q)_\infty}
\sum_{m=1}^{\infty}
\frac{q^{2m-1}}{(1+q^{2m-1})^2}\\&
=2\,\frac{(-q;q)_\infty}{(q;q)_\infty}
\sum_{m=1}^{\infty}\sum_{n=1}^{\infty} (-1)^{n-1}nq^{n(2m-1)},
\end{align*}
which can be written as
\begin{align*}
\sum_{n\ge1} \overline{SOME}(n) q^n
&=2\sum_{m=0}^{\infty}\overline{p}(m)q^m
\sum_{k=1}^{\infty}
\sum_{\substack{d \mid k \\ \frac{k}{d} \text{ odd}}} (-1)^{d-1}dq^k
\\&
=2\sum_{m=0}^{\infty}\overline{p}(m)q^m
\sum_{k=1}^{\infty}
\sigma_{o-e}(k)q^k.
\end{align*}
The proof follows from the above identity.

\subsection{Proof of Theorem \ref{thm3}}
We proceed as follows:
\begin{align*}
\sum_{n\ge1}\overline{S}_{b}(n) q^n
=&
\left.
\frac{\partial}{\partial z}
\prod_{\substack{n=1\\n\neq b}}^{\infty}
\frac{(1+q^{n})(1+(zq)^{b})}
{(1-q^{n})(1-(zq)^{b})}
\right|_{z=1}\\&
=\frac{(-q;q)_\infty}{(q;q)_\infty}\left(\frac{bq^b}{1+q^b}+\frac{bq^b}{1-q^b}\right)\\&
=\frac{(-q;q)_\infty}{(q;q)_\infty}\left(\frac{2bq^b}{1-q^{2b}}\right)\\&
=2b\sum_{n=0}^{\infty}\overline{p}(n)q^n
\sum_{k=0}^{\infty}q^{(2k+1)b}.
\end{align*}
The result follows from the last equality.

\section{Proofs of Theorem \ref{thm4} and Corollary \ref{thm41}}\label{sec4}
From \eqref{eq30}, it is evident that 
\begin{equation*}
    \sum_{n=0}^\infty \overline{SOME}(2n+1)q^{2n+1} = 2\frac{1}{\phi(-q^2)^2}\sum_{m=0}^\infty (2m+1)^2 q^{(2m+1)^2}.
\end{equation*}
Replacing $q^2$ by $q$ and using \eqref{eq27}, we obtain
\begin{equation*}
    \sum_{n=0}^\infty \overline{SOME}(2n+1)q^{n} = 2\frac{\phi(q)^2}{\phi(-q^2)^4}\sum_{m=0}^\infty (2m+1)^2 q^{2m^2+2m}.
\end{equation*}
Invoking \eqref{eq29} in the above equation and then extracting the terms involving $q^{2n+1}$ from both sides,
\begin{equation}\label{pf2}
    \sum_{n=0}^\infty \overline{SOME}(4n+3)q^{n} = 8\frac{\psi(q^2)^2}{\phi(-q)^4}\sum_{m=0}^\infty (2m+1)^2 q^{m^2+m}.
\end{equation}
Next employing \eqref{eq27} and \eqref{eq29} in \eqref{pf2} and extracting the terms involving $q^{2n+1}$ from both sides,
\begin{equation}\label{pf3}
    \sum_{n=0}^\infty \overline{SOME}(8n+7)q^{n} = 64\frac{\psi(q)^6}{\phi(-q)^8}\sum_{m=0}^\infty (2m+1)^2 q^{m(m+1)/2}.
\end{equation}
Congruence \eqref{eq2} and \eqref{eq3} follow from \eqref{pf2} and \eqref{pf3}, respectively.
Clearly,
\begin{align*}
    \overline{S_o}(4n+3) + \overline{S_e}(4n+3) &= (4n+3) \ \overline{p}(4n+3) \equiv 0 \pmod{8}, \\
    \overline{S_o}(8n+7) + \overline{S_e}(8n+7) &= (8n+7) \ \overline{p}(8n+7) \equiv 0 \pmod{64}
\end{align*}
and
\begin{align*}
    \overline{S_o}(4n+3) - \overline{S_e}(4n+3) &= \overline{SOME}(4n+3) \equiv 0 \pmod{8}, \\
    \overline{S_o}(8n+7) - \overline{S_e}(8n+7) &= \overline{SOME}(8n+7) \equiv 0 \pmod{64}.
\end{align*}
By adding and subtracting, we establish the Corollary \ref{thm41}.

\section{Proofs of Theorems \ref{thm5} to \ref{thm8}} \label{sec5}
From \eqref{eq30}, it is easy to see that
\begin{equation*}
    \sum_{n=0}^\infty \overline{SOME}(2n)q^{2n} = 2\frac{1}{\phi(-q^2)^2}\sum_{k=1}^\infty (2k)^2 q^{(2k)^2}
\end{equation*}
  which yields,
\begin{equation}\label{pf21}
    \sum_{n=0}^\infty \overline{SOME}(2n)q^{n} = 8\frac{\phi(q)^2}{\phi(-q^2)^4}\sum_{k=1}^\infty k^2 q^{2k^2}.
\end{equation}  
Using \eqref{eq29} in \eqref{pf21} and extracting the terms involving $q^{2n+1}$ and  $q^{2n}$ from both sides, 
\begin{equation}\label{pf22}
    \sum_{n=0}^\infty \overline{SOME}(4n+2)q^{n} = 32\frac{\psi(q^2)^2}{\phi(-q)^4}\sum_{k=1}^\infty k^2 q^{k^2}
\end{equation}
and
\begin{align}\label{pf23}
  \nonumber  \sum_{n=0}^\infty \overline{SOME}(4n)q^{n} &= 8\frac{\phi(q)^2}{\phi(-q)^4}\sum_{k=1}^\infty k^2 q^{k^2}\\ 
  &= 8\frac{\phi(q)^6}{\phi(-q^2)^8}\sum_{k=1}^\infty k^2 q^{k^2}.
\end{align}
We have
\begin{align}\label{pf24}
\nonumber\phi(q)^6 & = (\phi(q^2)^2+4q\psi(q^4)^2)^3 \\
          & = \phi(q^2)^6 +2^6 q^3 \psi(q^4)^6 + 12q \phi(q^2)^4 \psi(q^4)^2 + 3\cdot 2^4 q^2 \phi(q^2)^2 \psi(q^4)^4
\end{align}
and 
\begin{equation}\label{pf25}
    \frac{\phi(q)^{2^k}}{\phi(-q)^{2^k}} \equiv 1\pmod{2^{k+2}}.
\end{equation}
Substituting \eqref{pf24} into \eqref{pf23}, extracting the terms involving $q^{2n+1}$ and $q^{2n}$, and then using \eqref{pf25}, we obtain
\begin{equation}\label{pf26}
\sum_{n=0}^\infty \overline{SOME}(8n+4)q^{n} \equiv 2^3 \frac{1}{\phi(q)^2} \sum_{k=0}^\infty (2k+1)^2 q^{2k^2+2k} \pmod{2^7}
\end{equation}
and 
\begin{align}\label{pf27}
\nonumber\sum_{n=0}^\infty \overline{SOME}(8n)q^{n} &\equiv 2^5 \frac{1}{\phi(q)^2} \sum_{k=1}^\infty k^2 q^{2k^2}\\
&+ 3\cdot2^5 \frac{\psi(q^2)^2}{\phi(-q)^4} \sum_{k=0}^\infty (2k+1)^2 q^{2k^2+2k+1} \pmod{2^9}.
\end{align}
Using \eqref{eq29} in \eqref{pf27} and then extracting the terms involving $q^{2n+1}$ and $q^{2n}$,
\begin{equation}\label{pf28}
\sum_{n=0}^\infty \overline{SOME}(16n+8)q^{n} \equiv 3 \cdot 2^5\frac{\psi(q)^2\phi(q)^4}{\phi(-q)^8} \sum_{k=0}^\infty (2k+1)^2 q^{k^2+k} \pmod{2^7}
\end{equation}
and 
\begin{equation}\label{pf29}
\sum_{n=0}^\infty \overline{SOME}(16n)q^{n} \equiv 2^5 \frac{1}{\phi(q)^2} \sum_{k=1}^\infty k^2 q^{k^2} + 3 \cdot 2^8 \psi(q)^6 \sum_{k=0}^\infty (2k+1)^2 q^{k^2+k+1} \pmod{2^9}.
\end{equation}
Using the fact that $\psi(q)^2 \equiv\psi(q^2) \pmod{2}$ in the above identity and extracting the terms involving the odd and even powers of $q$ from both sides of \eqref{pf29}, we obtain
\begin{equation}\label{pf30}
\sum_{n=0}^\infty \overline{SOME}(32n+16)q^{n} \equiv 2^5 \phi(q)^2 \sum_{k=0}^\infty (2k+1)^2 q^{2k^2+2k} \pmod{2^8}
\end{equation}
and
\begin{equation}\label{pf31}
   \sum_{n=0}^\infty \overline{SOME}(32n)q^{n} 
\equiv2^7 \sum_{k=1}^\infty k^2 q^{2k^2} - 2^7 \psi(q^2)^2 \sum_{k=0}^\infty (2k+1)^2 q^{2k^2+2k+1} \pmod{2^9}.
\end{equation}
It follows directly from \eqref{pf31} that
\begin{equation}\label{pf32}
   \sum_{n=0}^\infty \overline{SOME}(64n+32n)q^{n} \equiv 2^7 \psi(q^2) \sum_{k=0}^\infty (2k+1)^2 q^{k^2+k} \pmod{2^8}
\end{equation}
and 
\begin{equation}\label{pf33}
   \sum_{n=0}^\infty \overline{SOME}(64n)q^{n} \equiv 2^7 \sum_{k=1}^\infty k^2 q^{k^2} \pmod{2^9}.
\end{equation}
Congruence \eqref{eq4} follows from \eqref{pf23} and \eqref{pf29}, while \eqref{eq5} follows from \eqref{pf27} and \eqref{pf31}. Moreover, Theorem \ref{thm6} follows from \eqref{pf21}, \eqref{pf27}, \eqref{pf31}, and \eqref{pf33}.

In view of \eqref{eq30}, \eqref{pf23}, \eqref{pf29}, and \eqref{pf33}, we see that
\begin{align*}
   \sum_{n=0}^\infty \overline{SOME}(2n)q^{n} &\equiv 2 \sum_{k=1}^\infty k^2 q^{k^2} \pmod{2^3},\\
   \sum_{n=0}^\infty \overline{SOME}(4n)q^{n} &\equiv 2^3 \sum_{k=1}^\infty k^2 q^{k^2} \pmod{2^5},\\
   \sum_{n=0}^\infty \overline{SOME}(16n)q^{n} &\equiv 2^5 \sum_{k=1}^\infty k^2 q^{k^2} \pmod{2^7},\\
   \sum_{n=0}^\infty \overline{SOME}(64n)q^{n} &\equiv 2^7 \sum_{k=1}^\infty k^2 q^{k^2} \pmod{2^9}.
\end{align*}
This completes the proof of Theorem \ref{thm7}.

From \eqref{pf22}, we have
\begin{equation}\label{pf34}
    \sum_{n=0}^\infty \overline{SOME}(4n+2)q^{n} \equiv 2^5\psi(q^2)^2\sum_{k=1}^\infty k^2 q^{k^2} \pmod{2^8}
\end{equation}
which yields,
\begin{equation}\label{pf35}
    \sum_{n=0}^\infty \overline{SOME}(8n+2)q^{n} \equiv 2^7\psi(q^2)\sum_{k=1}^\infty k^2 q^{2k^2} \pmod{2^8}
\end{equation}
and
\begin{equation}\label{pf36}
    \sum_{n=0}^\infty \overline{SOME}(8n+6)q^{n} \equiv 2^5\psi(q^2)\sum_{k=0}^\infty (2k+1)^2 q^{2k^2+2k} \pmod{2^6}.
\end{equation}
Thus, congruence \eqref{eq9} -- \eqref{eq12} follow from \eqref{pf34} -- \eqref{pf36}.

Employing \eqref{eq29} in \eqref{pf26} and extracting the terms involving the odd and even powers of $q$ from both sides,
\begin{equation}\label{pf37}
\sum_{n=0}^\infty \overline{SOME}(16n+12)q^{n} \equiv -2^5 \psi(q^2)^2 \sum_{k=0}^\infty (2k+1)^2 q^{k^2+k} \pmod{2^7}
\end{equation}
and
\begin{equation}\label{pf38}
\sum_{n=0}^\infty \overline{SOME}(16n+4)q^{n} \equiv 2^3 \phi(q)^2 \sum_{k=0}^\infty (2k+1)^2 q^{k^2+k} \pmod{2^5}.
\end{equation}
Congruences \eqref{eq13} and \eqref{eq14} follow from \eqref{pf37}, while congruence \eqref{eq15} follows from \eqref{pf38}.

From \eqref{pf28}, we have
\begin{equation}\label{pf39}
\nonumber   \sum_{n=0}^\infty \overline{SOME}(16n+8)q^{n} \equiv 2^5\psi(q^2) \sum_{k=0}^\infty (2k+1)^2 q^{k^2+k} \pmod{2^6}.
\end{equation}
This establishes congruence \eqref{eq16}.

The following congruences follow directly from \eqref{pf30}:
\begin{equation}\label{pf40}
\sum_{n=0}^\infty \overline{SOME}(64n+16)q^{n} \equiv 2^5 \phi(q)^2 \sum_{k=0}^\infty (2k+1)^2 q^{k^2+k} \pmod{2^8}
\end{equation}
and 
\begin{equation}\label{pf41}
\sum_{n=0}^\infty \overline{SOME}(64n+48)q^{n} \equiv 2^7 \psi(q^2)^2 \sum_{k=0}^\infty (2k+1)^2 q^{k^2+k} \pmod{2^8}.
\end{equation}
Hence, congruence \eqref{eq17} follows from \eqref{pf40}; congruences \eqref{eq18} and \eqref{eq19} follow from \eqref{pf41}; and congruence \eqref{eq191} follows from \eqref{pf32}.

By \eqref{pf33},
\begin{equation}\label{pf42}
   \sum_{n=0}^\infty \overline{SOME}(128n+64)q^{n} \equiv 2^7 \sum_{k=1}^\infty (2k+1)^2 q^{2k^2+2k} \pmod{2^9}.
\end{equation}
This completes the proof of congruences \eqref{eq192} and \eqref{eq193}.
\section{Proofs of Theorems \ref{thm10} and \ref{thm9}} \label{sec6}
\noindent Recall \eqref{eq30},
\begin{equation*}
  \sum_{n=0}^\infty \overline{SOME}(n)q^n = 2\frac{\phi(-q^2)}{\phi(-q^2)^3}\sum_{k=1}^{\infty} k^2 q^{k^2}.
\end{equation*}
From \cite[p. 49]{BCB}, we have 
\[\phi(q) = \phi(q^9)+2qf(q^3, q^{15}).\]
Substituting this into the above identity, we obtain 
\begin{equation*}
  \sum_{n=0}^\infty \overline{SOME}(n)q^n \equiv 2\frac{(\phi(-q^{18})-2q^2f(-q^6, -q^{30}))}{\phi(-q^6)}\sum_{k=1}^{\infty} k^2 q^{k^2}.
\end{equation*}
Clearly $k^2 \equiv -1 \pmod{3}$ has no solution. Further, $k^2+2 \equiv 2 \pmod{3}$ only when $k\equiv 0 \pmod{3}$. Hence, coefficient of $q^{3n+2}$ vanishes modulo $3$. That is for $n\geq0$
\[\overline{SOME}(3n+2) \equiv 0 \pmod{3}.\]
Also, $k^2+2 \equiv 1 \pmod{3}$ implies $k^2 \equiv -1 \pmod{3}$. Thus, we have
\begin{equation*}
      \sum_{n=0}^\infty \overline{SOME}(3n+1)q^{3n+1} \equiv 2 \frac{\phi(-q^{18})}{\phi(-q^6)} \sum_{\substack{k=1 \\  k \not\equiv 0 \pmod{3}}}^\infty k^2q^{k^2}.
\end{equation*}

Replacing $q^3$ by $q$, we obtain
\begin{equation*}
\sum_{n=0}^{\infty} \overline{\text{SOME}}(3n+1)q^{n}
\equiv
2\frac{\phi(-q^{6})}{\phi(-q^{2})}
\sum_{\substack{k=1 \\ k\not\equiv0\pmod{3}}}^{\infty} k^2 q^\frac{k^2-1}{3}.
\end{equation*}
Extracting the terms involving the even powers of $q$ from both sides, we obtain
\begin{equation*}
    \sum_{n=0}^{\infty} \overline{\text{SOME}}(6n+1)q^{2n} 
\equiv 2\frac{\phi(-q^{6})}{\phi(-q^{2})}
\sum_{\substack{k=0 \\ k\not\equiv1\pmod3}}^{\infty} (2k+1)^2 q^\frac{(2k+1)^2-1}{3}.
\end{equation*}
Replacing $q^2$ by $q$, we obtain
\begin{align*}
    \sum_{n=0}^{\infty} \overline{\text{SOME}}(6n+1)q^{n} 
&\equiv 2\frac{\phi(-q^{3})}{\phi(-q)} 
\sum_{\substack{k=0 \\ k\not\equiv1\pmod3}}^{\infty} (2k+1)^2 q^\frac{2k^2+2k}{3} \pmod{3}\\
&\equiv 2\phi(-q)^2\sum_{\substack{k=0 \\ k\not\equiv1\pmod3}}^{\infty} (2k+1)^2 q^\frac{2k^2+2k}{3} \pmod{3}.
\end{align*}
Using \eqref{eq29} in the above identity and extracting terms involving $q^{2n+1}$, we obtain
\begin{equation*}
    \sum_{n=0}^{\infty} \overline{\text{SOME}}(12n+7)q^{n}  \equiv \psi(q^2)^2 \sum_{\substack{k=0 \\ k\not\equiv1\pmod3}}^{\infty} (2k+1)^2 q^\frac{k^2+k}{3}\pmod{3},
\end{equation*}
which yields
\[\overline{SOME}(24n+19) \equiv 0 \pmod{3}\]
proving Theorem \ref{thm10}.

Let us recall \eqref{pf3}.
\begin{equation*}
    \sum_{n=0}^\infty \overline{SOME}(8n+7)q^{n} = 64\frac{\psi(q)^6}{\phi(-q)^8}\sum_{m=0}^\infty (2m+1)^2 q^{m(m+1)/2}.
\end{equation*}
Since
\[\frac{\psi(q)^6}{\phi(-q)^8} = \frac{f(-q^2)^{20}}{f(-q)^{22}} \equiv f(-q)^3\frac{f(-q^{10})^4}{f(-q^5)^5} \pmod{5}\]
and 
\[f(-q)^3 = \sum_{k=0}^\infty (-1)^k (2k+1) q^{\frac{k(k+1)}{2}}. \]
We see that
\begin{equation*}
    \sum_{n=0}^\infty \overline{SOME}(8n+7)q^{n} \equiv 64\frac{f(-q^{10})^4}{f(-q^5)^5}\sum_{k=0}^\infty (-1)^k (2k+1) q^{\frac{k(k+1)}{2}}\sum_{m=0}^\infty (2m+1)^2 q^{\frac{m(m+1)}{2}} \pmod{5}.
\end{equation*}
We now determine when
\[\frac{k(k+1)}{2} + \frac{m(m+1)}{2} \equiv 4 \pmod{5}.\]
A straightforward calculation modulo $5$ shows that this occurs only when $m \equiv 2\pmod{5}$ and $k\equiv 1\pmod{5}$, or $m \equiv 1\pmod{5}$ and $k\equiv 2\pmod{5}$. In either case, we have $2m+1 \equiv 0 \pmod{5}$ or $2k+1 \equiv 0 \pmod{5}$. Therefore, the coefficient of $q^{5n+4}$ is divisible by $5$, and hence it vanishes modulo $5$. That is for $n\geq0$
\[\overline{SOME}(40n+39) \equiv 0\pmod{5}.\]
Also, we see that
\[\frac{k(k+1)}{2} + \frac{m(m+1)}{2} \equiv 3 \pmod{5}\]
occurs only when $m \equiv 2\pmod{5}$ and $k\equiv 0\pmod{5}$, or $m \equiv 0\pmod{5}$ and $k\equiv 2\pmod{5}$. Thus, either $2m+1 \equiv 0 \pmod{5}$ or $2k+1 \equiv 0 \pmod{5}$. Hence, the coefficient of $q^{5n+3}$ vanishes. Therefore, for $n\geq0$,
\[\overline{SOME}(40n+31) \equiv 0\pmod{5}.\]
This completes the proof of Theorem \ref{thm9}.

\subsection*{Funding} No funding available for this article.
\subsection*{Data Availability} Data sharing is not applicable to this article as no data sets were generated or analyzed.

\end{document}